			\providecommand{\Prob}[1]{\mathbb{P}\{#1\}}
			\providecommand{\card}[1]{\texttt{\#}#1}
			\newcommand{\Ewens}[1]{\text{Ewens}(#1)}
			\providecommand{\risfac}[2]{#1^{\overline{#2}}} 
\newcommand{\mE}{\mathbb{E}}
\newcommand{\mN}{\mathbb{N}}
\newcommand{\mP}{\mathbb{P}}
\newcommand{\cQ}{\mathcal{Q}}
\newcommand{\cB}{\mathcal{B}}
\newcommand{\eps}{\varepsilon}
\newcommand{\1}{\mathds{1}}
\newtheorem {definition}{Definition}[section]
\newtheorem {theorem}[definition]{Theorem}
\newtheorem{proposition}[definition]{Proposition}
\newtheorem {lemma}[definition]{Lemma}
\newtheorem {corollary}[definition]{Corollary}
\newtheorem {remark}[definition]{Remark}
\DeclareMathOperator*{\Var}{Var}
\author{Helmut H.~Pitters}
\address{Helmut H.~Pitters: Mathematical Institute, University of Mannheim}
\email{hpitters@mail.uni-mannheim.de}
\author{Philip~Weissmann}
\address{PhilipWeissmann: Mathematical Institute, University of Mannheim}
\email{hweissma@mail.uni-mannheim.de}
\thanks{The authors acknowledge financial support by the DFG RTG 1953.}
\title[P\MakeLowercase{oisson limit for the number of cycles in random permutations}]{Poisson limit for the number of cycles in a random permutation and the number of segregating sites}
\begin{document}

\begin{abstract}
  Consider a random permutation of $\{1, \ldots, \lfloor n^{t_2}\rfloor\}$ drawn according to the Ewens measure with parameter $t_1$ and let $K(n, t)$ denote the number of its cycles, where $t\equiv (t_1, t_2)\in\mathbb [0, 1]^2$.
  
  Next, consider a sample drawn from a large, neutral population of haploid individuals subject to mutation under the infinitely many sites model of Kimura whose genealogy is governed by Kingman's coalescent. Let $S(n, t)$ count the number of segregating sites in a sample of size $\lfloor n^{t_2}\rfloor$ when mutations arrive at rate $t_1/2$.
  
  We show that $K(n, (t_1/\log n, t_2))-1$ and $S(n, (t_1/\log n, t_2))$ induce unique random measures $\Pi_n^K$ and $\Pi_n^S,$ respectively, on the positive quadrant $[0, \infty)^2.$ Our main result is to show that in the coupling of $S(n, t)$ and $K(n, t)$ introduced in~\cite{Pitters2019} we have weak convergence as $n\to\infty$
  \begin{align*}
    (\Pi_n^K, \Pi_n^S)\to_d (\Pi, \Pi),
  \end{align*}
where $\Pi$ is a Poisson point process on $[0, \infty)^2$ of unit intensity. This complements the work in~\cite{Pitters2019} where it was shown that the process $\{(K(n, t), S(n, t)), t\in [0, 1]^2\},$ appropriately rescaled, converges weakly to the product of the same one-dimensional Brownian sheet.
\end{abstract}

\maketitle 

	2010 {\sl Mathematics subject classification.} 60B10, 60B15, 60F99 (primary), 60G55 (secondary)\\
	
	{\sl Keywords:} random permutation, Ewens measure, segregating sites, Poisson random measure

\section{Introduction and main results}\label{sec_intro}
%
%
Let us first present the two random models that we study.

\subsection{Number of cycles in a random permutation}
For a natural number $n\in\mathbb N\coloneqq \{1, 2, \ldots \}$ let $\mathfrak S_n$ denote the symmetric group of permutations of $[n]\coloneqq \{1, \ldots, n\}.$ For any permutation $\sigma\in\mathfrak S_n$ let $\card\sigma$ denote the number of cycles in $\sigma.$ Random permutations and their cycle structure have been studied extensively and have a long history. One of the most celebrated families of probability measures on $\mathfrak S_n$ is the so-called Ewens measure parameterised by some parameter $t_1> 0.$ We say that $\Sigma(n)\equiv\Sigma(n, t_1)$ is governed by the Ewens($n, t_1$) distribution on $\mathfrak S_n$ if for any $\sigma\in\mathfrak S_n$
\begin{align}\label{eq:def_ewens}
  \Prob{\Sigma(n)=\sigma} &= \frac{t_1^{\card\sigma}}{\risfac{t_1}{n}}\mathds 1\{\sigma\in\mathfrak S_n\},
\end{align}
where for any $x\in\mathbb R,$ $\risfac{x}{n}\coloneqq x(x+1)\cdots (x+n-1)$ denotes the $n$th rising factorial power of $x$.
In this case we write $\Sigma(n)\sim\Ewens{n, t_1}$. Here for any event $A$, $\mathds 1A$ denotes its indicator which equals one if $A$ occurs and zero otherwise.
\begin{remark}
  Our notation differs from the notation in the literature where the parameter $t_1$ is usually denoted by $\theta.$
\end{remark}

In what follows we focus on $K(n, t)$, the number of cycles in the random permutation $\Sigma(\lfloor n^{t_2}\rfloor, t_1)$ for $t\equiv (t_1, t_2)\in [0, \infty)^2$. More specifically, we are interested in the asymptotic behaviour of $K(n, t)$ for large $n$. For a law of large numbers and (functional) central limit theorems for $K(n, t)$, and an overview of related results in the literature the reader is referred to~\cite{Pitters2019}. There the author provides a coupling of $K(n, t)$ in both $n$ and $t$. In particular, in this coupling the author shows weak convergence of processes as $n\to\infty$
  \begin{align}\label{eq:brownian_sheet_fdd}
    \left\{\frac{K(n, t)-t_1t_1\log n}{\sqrt{\log n}}, t\in\mathbb [0, 1]^2\right\} \to \{\mathscr B(t), t \in\mathbb [0, 1]^2\}
  \end{align}
where $\mathscr B$ denotes the one-dimensional Brownian sheet on the unit square. Here we work in the same coupling as mentioned above. However, as opposed to~\cite{Pitters2019}, here we are interested in the large $n$ limit of $K(n, t)$ when the parameter $t_1$ is replaced by $t_1/\log n$. We will see that $t\mapsto K(n, (t_1/\log n, t_2))-1$ interpreted as a distribution function induces a point process $\Pi_n^K$ on $[0, \infty)^2$ whose asymptotics we study.

\subsection{Number of segregating sites}

In large neutral populations of haploid individuals the genealogy of a sample of $n$ individuals is often modeled by Kingman's $n$-coalescent, and there are rigorous mathematical results justifying this approximation. A verbal description of this stochastic process is as follows. Picture the individuals in the sample labeled $1, \ldots, n$, with a line of descent emanating from each individual and growing at unit speed. At rate one any pair of individuals merges, i.e.~their lines of descent merge into a single line representing the most recent common ancestor of this pair. After the first merger the process continues with $n-1$ lines of descent following the same dynamics as before. It is clear from this description that the genealogy of a sample of $n$ individuals may be represented as a (random) rooted tree with $n$ leafs labeled $1, \ldots, n.$ 
			
			In addition to the genealogy mutations are modeled as follows. Conditionally given the genealogical tree (or coalescent tree), throw down points onto the branches of the tree (identified with intervals of the real line) according to a Poisson point process with constant intensity $t_1/2>0$, the so-called mutation rate. Each point of the Poisson process is then interpreted as a mutation affecting any leaf (the individual in the sample) with the property that the unique path connecting the leaf to the root of the tree crosses said mutation. A formal way to define this procedure is to identify Kingman's coalescent with a random ultrametric space on which a Poisson process can then be defined. However, this is beyond the scope of this article, and we refer the interested reader to Evans' lecture notes~\cite{Evans2007} instead.
			
			We restrict ourselves to the infinitely many sites model of Kimura~\cite{Kimura1969}. According to Kimura's model each mutation is thought of as acting on one of infinitely many sites, i.e.~each jump of the Poisson process on the tree introduces a mutation on a site where no mutation was previously observed. For detailed expositions of probabilistic models for the evolution of DNA sequences the interested reader is referred to Durrett~\cite{Durrett2008}, Etheridge~\cite{Etheridge2011}, and Tavar\'{e}~\cite{Tavare2004}.
			
Let $S(n, t)$ denote the number of segregating sites in Kingman's $\lfloor n^{t_2}\rfloor$-coalescent with mutations arriving at rate $t_1/2$. Watterson~\cite{Watterson1975} showed a law of large numbers and a central limit theorem for $S(n, (t_1, 1))$. This central limit theorem was extended in~\cite{Pitters2019} to a functional central limit theorem in a coupling of $S(n, t)$ for both $n$ and $t$ induced in a natural way by Kingman's coalescent.
Namely, it was shown that as $n\to\infty$ one has weak convergence of processes
  \begin{align}\label{eq:brownian_sheet_fdd}
    \left\{\frac{S(n, t)-t_1t_2\log n}{\sqrt{\log n}}, t\in\mathbb [0, 1]^2\right\} \to \{\mathscr B(t), t \in\mathbb [0, 1]^2\},
  \end{align}
where $\mathscr B$ is a one-dimensional Brownian sheet. Again, we work in the same coupling induced by Kingman's coalescent, but we replace the mutation rate $t_1$ by $t_1/\log n$. We show that $S(n, (t_1/\log n, t_2))$ can be interpreted as a distribution function inducing a point process $\Pi_n^S$ on $[0, \infty)^2$. 

%

\smallskip

 Our main result provides the joint convergence of $\Pi_n^S$ and $\Pi_n^K$.
\begin{theorem}\label{main_result2}
The sequence of joint random measures $\{(\Pi^S_n,\Pi_n^K),n \in \mN\}$ converges weakly to $(\Pi,\Pi)$ for $n \rightarrow \infty$, where $\Pi$ is a Poisson random measure on $[0,\infty)^2$ with intensity measure $\lambda^2$.
\end{theorem}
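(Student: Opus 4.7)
The overall strategy is to decouple the problem into two pieces: (a) prove the marginal convergence $\Pi_n^K \to_d \Pi$, and (b) show that under the coupling of~\cite{Pitters2019} the event $\{\Pi_n^K = \Pi_n^S\text{ on }[0,T_1]\times[0,T_2]\}$ has probability tending to one for every fixed $T_1,T_2 > 0$. Once (a) and (b) are in place, the general coupling principle for random measures on a Polish space (if $X_n\to_d X$ and $\mP(X_n = Y_n)\to 1$ then $(X_n,Y_n)\to_d (X,X)$) yields the joint convergence $(\Pi_n^S,\Pi_n^K) \to_d (\Pi,\Pi)$ claimed in Theorem~\ref{main_result2}.

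For (a), I would apply Kallenberg's theorem for simple point processes: since $\Pi$ is simple, it suffices to check, on a semiring of bounded sets generating the Borel $\sigma$-algebra of $[0,\infty)^2$, the two conditions
\begin{align*}
\mE\Pi_n^K(A) \longrightarrow \lambda^2(A), \qquad \mP(\Pi_n^K(A) = 0) \longrightarrow \exp(-\lambda^2(A)).
\end{align*}
The Feller coupling writes $K(n,(t_1/\log n, t_2)) - 1 = \sum_{j=2}^{\lfloor n^{t_2}\rfloor} \xi_j(t_1)$ with independent Bernoullis $\xi_j(t_1) = \mathds 1\{U_j \leq (t_1/\log n)/(t_1/\log n + j - 1)\}$, so that $\Pi_n^K$ has atoms at the locations $(T_j^n,\log j/\log n)$ with $T_j^n := (j-1)U_j\log n/(1 - U_j)$. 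Both Kallenberg conditions then reduce to a Poisson limit theorem for a row-wise independent Bernoulli array: the sum of success probabilities matches $\lambda^2(A)$ via $\sum_{j=2}^{\lfloor n^s\rfloor}1/(j-1) = s\log n + O(1)$, and the sum of squared probabilities is $O((\log n)^{-2})$, which justifies the Taylor expansion $\log(1-p_j) = -p_j + O(p_j^2)$ used to compute the avoidance probability.

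For (b), the key observation is that under the coupling of~\cite{Pitters2019} the atoms of $\Pi_n^K$ correspond exactly to the \emph{first} mutation at each coalescent level $j$ in the construction of $\Pi_n^S$. Concretely, at each level $j \in \{2,\ldots,n\}$ the coupling uses an i.i.d.\ sequence of uniforms $(U_j^{(m)})_{m\geq 1}$ to decide between mutation and coalescence, with $U_j^{(1)} = U_j$, so that $\xi_j$ coincides with the indicator that the first trial at level $j$ is a mutation; hence the atom of $\Pi_n^S$ produced by the first mutation at level $j$ shares both coordinates with the atom of $\Pi_n^K$ at level $j$. The two processes can therefore disagree on $[0,T_1]\times[0,T_2]$ only if some level $j \leq \lfloor n^{T_2}\rfloor$ carries at least two mutations with $t_1$-threshold in $[0,T_1]$, an event of probability at most
\begin{align*}
\mP\bigl(\Pi_n^K \neq \Pi_n^S\text{ on }[0,T_1]\times[0,T_2]\bigr) \leq \sum_{j=2}^{\lfloor n^{T_2}\rfloor}\Bigl(\frac{T_1/\log n}{T_1/\log n + j - 1}\Bigr)^2 = O((\log n)^{-2}) \longrightarrow 0.
\end{align*}

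The main obstacle I foresee is in step (b): one must carefully open up the coupling of~\cite{Pitters2019} to verify that the atoms of $\Pi_n^K$ and $\Pi_n^S$ match pointwise (not merely in distribution), and handle the dependence of the mutation counts $N_j$ on $t_1$ — as $t_1$ varies individual trials flip between mutation and coalescence while the underlying uniforms remain fixed. Step (a) is largely routine once the atom locations have been identified, and the final coupling argument is the standard upgrade from marginal weak convergence to joint weak convergence via a mismatch probability tending to zero.
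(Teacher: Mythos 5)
Your proposal is correct in outline and reaches the same conclusion, but it is organised differently from the paper's proof, so let me compare. The paper first proves the marginal convergence $\Pi_n^S\to_d\Pi$ (Theorem \ref{main_result}), which is nearly immediate in the coupling because, conditionally on the tree lengths, $\Pi_n^S$ of a rectangle is Poisson with a mean governed by $L_{\lfloor n^{t}\rfloor}/\log n\to 2t$; it then shows $\Pi_n^S-\Pi_n^K\to 0$ by computing the first two moments of $\Delta(n,t)=\Pi_n^S[0,t)-\Pi_n^K[0,t)$ (Watterson's formulas plus Lemma \ref{lemma_Var}) and invoking Theorem \ref{thm_Kallenberg}, and finally combines the two via a converging-together theorem and the continuous mapping $f(x,y)=(x,x-y)$. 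You instead prove the marginal for $\Pi_n^K$ directly through a Poisson limit theorem for the Feller-coupling Bernoulli array, and you replace the $L^2$ estimate by the stronger statement that $\Pi_n^K$ and $\Pi_n^S$ \emph{coincide} on compacta with probability tending to one. Both halves of your plan are sound: in the paper's coupling the level-$k$ contribution to $\Pi_n^K$ is by definition the indicator that the level-$k$ contribution to $\Pi_n^S$ is at least one, and both place their atoms at the same coordinates (the $t_1$-coordinate of the lowest Poisson point in the strip $[L_{k-1},L_k)$, and $\log k/\log n$), so the two measures differ on $[0,T_1]\times[0,T_2]$ only when some strip carries two points below level $T_1/\log n$, an event of probability $O((\log n)^{-2})$ exactly as you estimate. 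Your approach buys a more transparent probabilistic picture and a stronger intermediate statement; the paper's buys a purely moment-based argument that never has to identify atom locations, and its $\Pi_n^S$ marginal is easier than your $\Pi_n^K$ marginal because the Poisson structure is handed to it. Two points you should tighten. First, condition (ii) of Proposition \ref{prop_Kallenberg} must be verified for \emph{all} compact sets, not only for members of a generating semiring; as in the paper you need to cover a compact $C$ by countably many disjoint rectangles, establish the uniform domination $\mE\,\Pi_n^K B\leq\lambda^2(B)$ for rectangles (which follows from $\frac{\theta_u}{\theta_u+j-1}-\frac{\theta_s}{\theta_s+j-1}\leq\frac{\theta_u-\theta_s}{j-1}$ together with the harmonic-sum bound \eqref{eq_harmonic}), and apply dominated convergence, with a separate treatment of the strip near the $t_2$-axis. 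Second, for step (b) you should work directly with the paper's $\mathfrak P$-based coupling rather than a sequential-uniforms (Hoppe urn) construction: the obstacle you flag about trials flipping between mutation and coalescence as $t_1$ varies simply does not arise there, since increasing $t_1$ only widens the strip $[0,t_1/\log n)$ and reveals additional Poisson points, and the distributional identification of this $\Pi_n^K$ with your Feller-coupling version (needed so that (a) and (b) refer to the same object in law) should be stated explicitly.
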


\section{Preliminaries}
In this section we collect results from the literature that we will use in the sequel.

\textbf{The coupling.} Kingman's coalescent suggests a natural way to couple the Ewens measure on permutations together with the number of segregating sites. We recall this coupling which was introduced in~\cite{Pitters2019}.
To define the random measures $\Pi^S_n$ and $\Pi^K_n$ we fix a Poisson point process $\mathfrak{P}$ on $[0,\infty)^2$ with intensity measure $\lambda^2/2$, where $\lambda^2$ is  Lebesgue measure on $\cB([0,\infty)^2)$.

By $L_n$ let us denote the length of Kingman's coalescent tree on $n \geq 2$ leaves. It follows that
\begin{align}\label{Ln_exp}
L_n =_d \sum_{k=2}^n \tau_k,
\end{align}
where the $(\tau_k)$ are independent random variables such that $\tau_k$ obeys an exponential distribution with parameter $\binom{k}{2}$. Furthermore, let
$$\mathcal{Q}_0:= \Big\{[0,t_1) \times [0,t_2) : t=(t_1,t_2) \in [0,\infty)^2 \Big\}$$
be the set of rectangles in $[0,\infty)^2$ having a corner point in the origin.  We define two random functions on $\mathcal{Q}_0$ by setting
\begin{align*}
\Pi^S_n[0,t):=& \,\card  \big( \mathfrak{P} \cap [0,L_{\lfloor n^{t_2} \rfloor}) \times [0,t_1/{\log n}) \big) \\
=&\,\sum_{k=2}^{\lfloor n^{t_2} \rfloor} \card  \big( \mathfrak{P} \cap [L_{k-1},L_{k}) \times [0,t_1/{\log n}) \big)
\end{align*}
and
\begin{align*}
\Pi^K_n[0,t) := \sum_{k=2}^{\lfloor n^{t_2} \rfloor} \1 \Big\{ \big(\card \mathfrak{P} \cap [L_{k-1},L_{k}) \times [0,t_1/{\log n}) \big) \geq 1 \Big\}
\end{align*}
for $t = (t_1,t_2) \in [0,\infty)^2$. Using the coupling of $S$ and $K$ one can see that $\Pi^S_n[0,t) =_d \tilde{S}(n,t):=S(n,(t_1/ \log n, t_2))$ and $\Pi^K_n[0,t) + 1 =_d \tilde{K}(n,t):= K(n,(t_1/ \log n, t_2))$.

Our first result states that $\Pi_n^S$ and $\Pi_n^K$ can be extended to random measures.
\begin{proposition}\label{prop_measures}
The random functions $\Pi_n^S$ and $\Pi_n^K$ on $\cQ_0$ can be extended to random measures on $\cB([0,\infty)^2)$.
\end{proposition}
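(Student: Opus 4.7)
The plan is to exhibit $\Pi_n^S$ and $\Pi_n^K$ explicitly as locally finite, purely atomic random measures on $[0,\infty)^2$ and then invoke uniqueness of extension from a $\pi$-system generating $\cB([0,\infty)^2)$. The only substantive point is figuring out where each atom of $\mathfrak{P}$ should be relocated in the rescaled $(t_1,t_2)$-coordinates.

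First I would observe that Kingman's coalescent has a.s.\ finite total length: by~\eqref{Ln_exp} and $\tau_k\sim\mathrm{Exp}(\binom{k}{2})$,
\begin{align*}
\mE L_\infty = \sum_{k\geq 2}\frac{2}{k(k-1)} = 2 < \infty.
\end{align*}
Enumerating the atoms of $\mathfrak{P}$ measurably as $\{(X_i,Y_i)\}_{i\in\mN}$, only atoms with $X_i<L_\infty$ can ever contribute to $\Pi_n^S[0,t)$ or $\Pi_n^K[0,t)$, and for each such atom the integer $k_i:=\min\{k\geq 2:L_k>X_i\}$ is a.s.\ finite. Writing $\log_n x := \log x/\log n$ and $L_1:=0$, I would define the candidate random measures
\begin{align*}
\widetilde{\Pi}_n^S := \sum_{i:\,X_i<L_\infty}\delta_{(Y_i\log n,\,\log_n k_i)}, \qquad \widetilde{\Pi}_n^K := \sum_{k\geq 2:\,N_k\geq 1}\delta_{(Y^{(k)}_{\min}\log n,\,\log_n k)},
\end{align*}
where $N_k := \#\bigl(\mathfrak{P}\cap[L_{k-1},L_k)\times[0,\infty)\bigr)$ and, on $\{N_k\geq 1\}$, $Y^{(k)}_{\min}$ is the smallest $Y$-coordinate among atoms of $\mathfrak{P}$ whose $X$-coordinate lies in $[L_{k-1},L_k)$. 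Both objects are $\sigma(\mathfrak{P},(\tau_k)_{k\geq 2})$-measurable, and on any bounded rectangle $[0,T_1]\times[0,T_2]$ they are dominated by $\#(\mathfrak{P}\cap[0,L_\infty)\times[0,T_1/\log n))$, which conditional on $L_\infty$ is Poisson with parameter $L_\infty T_1/(2\log n)$, so local finiteness is automatic.

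The remaining step is to verify agreement on $\cQ_0$. For any $t_2\notin\{\log_n k:k\geq 2\}$ --- a set of full Lebesgue measure --- the equivalences $\log_n k_i<t_2\Leftrightarrow k_i\leq\lfloor n^{t_2}\rfloor\Leftrightarrow X_i<L_{\lfloor n^{t_2}\rfloor}$, combined with $Y_i\log n<t_1\Leftrightarrow Y_i<t_1/\log n$, yield $\widetilde{\Pi}_n^S([0,t_1)\times[0,t_2))=\Pi_n^S[0,t)$; the analogous identity for $\widetilde{\Pi}_n^K$ follows from the same equivalences applied level-wise in $k$, together with the fact that $\#(\mathfrak{P}\cap[L_{k-1},L_k)\times[0,t_1/\log n))\geq 1$ iff $N_k\geq 1$ and $Y^{(k)}_{\min}\log n<t_1$. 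Since rectangles $[0,t_1)\times[0,t_2)$ with $t_2$ in any dense subset of $[0,\infty)$ still form a $\pi$-system generating $\cB([0,\infty)^2)$, uniqueness of extension identifies $\widetilde{\Pi}_n^S$ and $\widetilde{\Pi}_n^K$ as the claimed extensions. The only subtle point, and the one I expect to be the main obstacle to writing cleanly, is the countable exceptional set $t_2=\log_n k$, at which the nominal set-function $\Pi_n^S$ and the candidate measure can differ by the multiplicity $\#\{i:k_i=k,\,Y_i<t_1/\log n\}$; this is harmless because a generating $\pi$-system determines the extension uniquely, but it forces a careful choice of atom locations and conventions about open versus half-open boundaries.
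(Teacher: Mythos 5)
Your route is genuinely different from the paper's. The paper extends $\Pi_n^S,\Pi_n^K$ from $\cQ_0$ to general half-open rectangles by inclusion--exclusion, then to the ring of finite disjoint unions of rectangles by fiat, and invokes a Carath\'eodory-type extension theorem for random measures (Molchanov); it never exhibits the atoms. You instead construct the extensions explicitly as purely atomic random measures, pushing each point of $\mathfrak{P}$ forward under the random relocation $(X_i,Y_i)\mapsto (Y_i\log n,\log k_i/\log n)$. This is more concrete, makes simplicity and local finiteness immediate (your finiteness bound via $\mE L_\infty=2$ is correct), and sidesteps the $\sigma$-additivity-on-a-ring verification that the paper merely asserts. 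The level-wise construction of $\widetilde{\Pi}_n^K$ via $Y^{(k)}_{\min}$ is also correct (note only that $N_k=\infty$ a.s.\ since the strip $[L_{k-1},L_k)\times[0,\infty)$ has infinite Lebesgue measure, so the condition $N_k\geq 1$ is vacuous; the minimum is still attained a.s.).

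The one place your argument does not close is exactly the boundary issue you flag, and your appeal to $\pi$-system uniqueness is backwards there. Uniqueness tells you that any two locally finite measures agreeing on the dense sub-$\pi$-system coincide; it cannot upgrade $\widetilde{\Pi}_n^S$ to an extension of the given set function if the two genuinely disagree at some rectangles of $\cQ_0$. And they do disagree: at $t_2=\log k/\log n$ the map $t_2\mapsto\lfloor n^{t_2}\rfloor$ jumps and already takes its new value, so $t_2\mapsto\Pi_n^S[0,t)$ is right-continuous at its jumps, whereas $t_2\mapsto\mu([0,t_1)\times[0,t_2))$ is left-continuous for every measure $\mu$ by continuity from below. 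Consequently no measure extends $\Pi_n^S$ from all of $\cQ_0$ verbatim, and your own construction is a proof of that. The honest conclusion is that the proposition holds only after adopting the left-continuous convention (equivalently, replacing $\lfloor n^{t_2}\rfloor$ by $\lceil n^{t_2}\rceil-1$ in the definition, or accepting agreement off the Lebesgue-null set $\{t_2\in\{\log k/\log n: k\geq 2\}\}$); none of the later arguments use the values at these exceptional rectangles, and the distributional identities with $\tilde S$ and $\tilde K$ are unaffected for $t_2$ outside this countable set. You should state this modification explicitly rather than letting uniqueness absorb it. The same defect is present, unacknowledged, in the paper's own proof: the right-continuity at the jumps breaks $\sigma$-additivity of the set function on the ring $\mathcal{R}$, so the Carath\'eodory hypothesis is not actually satisfied without the same convention change.
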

This result provides the interpretation of $t\mapsto\tilde{S}(n,t)$ and $t\mapsto\tilde{K}(n,t)$ as the distribution functions of random measures $\Pi_n^S$ and $\Pi_n^K$, respectively.

We collect some necessary conditions for convergence of point processes that we need in the sequel. Before, we need the following conventions: First, we call a set $\mathcal{U}$ of subsets of a state space $S$ separating if for all compact $C \subseteq S$ and open $G \subseteq S$ with $C \subseteq G$ there is an $U \in \mathcal{U}$ such that $C \subseteq U \subseteq G$. Moreover, a point process $\xi$ on a state space $S$ is called simple if $\xi \{ s \} \leq 1$ for all singletons $s \in S$.

\begin{proposition}[Proposition 16.17 of \cite{MR2161313}] \label{prop_Kallenberg} 
Let $\{ \xi_n : n \in \mN \}$ be a sequence of point processes and let $\xi$ be a simple point process on a locally compact, second countable Hausdorff space $S$. Then $\xi_n \to_d \xi$ as $n \rightarrow \infty$ under the following conditions:
\begin{enumerate}[(i)]
\item $\lim_{n \rightarrow \infty} \mP(\xi_n U =0) =\mP(\xi U =0)$ for all $U \in \mathcal{U}$, where $\mathcal{U}$ is a separating class of $S$ which consists of relatively compact sets.
\item $\limsup_{n \rightarrow \infty} \mE \xi_n C  \leq \mE  \xi  C$ for all compact $C \in S$.
\end{enumerate}
\end{proposition}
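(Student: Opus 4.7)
My plan is to apply Kallenberg's criterion (Proposition~\ref{prop_Kallenberg}) to establish $\Pi_n^S \to_d \Pi$, then show that the defect $\Pi_n^S - \Pi_n^K$ vanishes in probability, so that the joint convergence follows by Slutsky and the continuous mapping theorem. As a separating class I take $\mathcal{U}$ to be finite disjoint unions of half-open rectangles $[a_1,b_1)\times[a_2,b_2)$ with rational endpoints in $[0,\infty)^2$; these are relatively compact and separate compacta from open sets.

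\textbf{Marginal convergence for $\Pi_n^S$.} By Proposition~\ref{prop_measures} and inclusion-exclusion on the corner rectangles in $\mathcal{Q}_0$, one reads off
$$\Pi_n^S\bigl([a_1,b_1)\times[a_2,b_2)\bigr) = \#\Bigl(\mathfrak{P}\cap [L_{\lfloor n^{a_2}\rfloor},L_{\lfloor n^{b_2}\rfloor})\times [a_1/\log n,\,b_1/\log n)\Bigr),$$
which, conditionally on the coalescent lengths, is Poisson with parameter $\tfrac{b_1-a_1}{2\log n}(L_{\lfloor n^{b_2}\rfloor}-L_{\lfloor n^{a_2}\rfloor})$. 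Equation~\eqref{Ln_exp} together with $\mathbb{E}\tau_k = 2/(k(k-1))$ yields $\mathbb{E}L_m\sim 2\log m$ and $\mathrm{Var}(L_m)=O(1)$, so $L_{\lfloor n^t\rfloor}/\log n \to 2t$ in probability. Bounded convergence of the conditional Laplace functional then gives $\mathbb{P}(\Pi_n^S(R)=0)\to e^{-\lambda^2(R)}$ for each rectangle $R$. The same argument extends to finite disjoint unions, since the preimages in $\mathfrak{P}$ remain disjoint and hence the conditional Poisson counts are independent given $(L_k)$; this settles condition~(i). For condition~(ii), enclose a compact $C$ in finitely many half-open rectangles of total Lebesgue measure arbitrarily close to $\lambda^2(C)$ and use the mean formula together with outer regularity. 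Thus $\Pi_n^S \to_d \Pi$.

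\textbf{Asymptotic negligibility of the defect.} On a corner rectangle one reads off
$$\Pi_n^S[0,t) - \Pi_n^K[0,t) = \sum_{k=2}^{\lfloor n^{t_2}\rfloor}(N_k-1)^+,$$
where $N_k := \#(\mathfrak{P}\cap[L_{k-1},L_k)\times[0,t_1/\log n))$ is, conditionally on $\tau_k$, Poisson with mean $\lambda_k=t_1\tau_k/(2\log n)$. The elementary bound $\mathbb{E}[(N-1)^+\mid\lambda]=\lambda-1+e^{-\lambda}\leq \lambda^2/2$, combined with $\mathbb{E}\tau_k^2=2/\binom{k}{2}^2$ and the summability of $\sum_{k\geq 2}\binom{k}{2}^{-2}$, yields $\mathbb{E}|\Pi_n^S[0,t)-\Pi_n^K[0,t)|=O((\log n)^{-2})$. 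Inclusion-exclusion transfers this bound to every $U\in\mathcal{U}$.

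\textbf{Joint convergence.} Decompose $(\Pi_n^S,\Pi_n^K) = \bigl(\Pi_n^S,\,\Pi_n^S-(\Pi_n^S-\Pi_n^K)\bigr)$. By the first step $\Pi_n^S\to_d\Pi$, while by the second $\Pi_n^S-\Pi_n^K\to 0$ in $L^1$, hence in probability, on each $U\in\mathcal{U}$. Slutsky's lemma and the continuous mapping theorem applied to $(\mu,\nu)\mapsto(\mu,\mu-\nu)$ then deliver $(\Pi_n^S,\Pi_n^K)\to_d (\Pi,\Pi)$. I expect the main technical hurdle to be the simultaneous verification of condition~(i) of Kallenberg's criterion for finite disjoint unions of rectangles: the avoidance probability is an expectation of a Laplace functional with a random parameter depending on the $L_k$, and one must translate the in-probability asymptotics $L_{\lfloor n^t\rfloor}/\log n\to 2t$ into convergence of these expectations in a manner compatible with the joint structure of the union.
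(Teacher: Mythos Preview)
The statement you were given is Proposition~\ref{prop_Kallenberg}, which is Kallenberg's abstract criterion for weak convergence of point processes. The paper does not prove this proposition; it is quoted from Kallenberg's book and used as a black box. Your proposal does not prove it either: you explicitly \emph{apply} it. What you have actually sketched is a proof of Theorems~\ref{main_result} and~\ref{main_result2}, not of the proposition itself. So strictly speaking there is nothing to compare for the stated target.

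If the intended target was the main convergence results, then your overall architecture coincides with the paper's: verify (i) and (ii) of Kallenberg's criterion for $\Pi_n^S$ by conditioning on the coalescent lengths, show the defect $\Pi_n^S-\Pi_n^K$ vanishes, and finish via Slutsky plus continuous mapping with $(x,y)\mapsto(x,x-y)$. Two details differ. First, for condition~(ii) you invoke outer regularity (enclose $C$ in a finite union of rectangles of nearly the same measure), which is shorter than the paper's route through countable rectangle decompositions and a separate treatment of sets touching the axis. Second, for the defect you use the identity $\Pi_n^S[0,t)-\Pi_n^K[0,t)=\sum_k(N_k-1)^+$ together with $\mathbb{E}[(N-1)^+\mid\lambda]\le\lambda^2/2$, whereas the paper computes $\mathbb{E}\Delta(n,t)$ and $\Var\Delta(n,t)$ separately from the explicit formulae in Theorem~\ref{thm_pitters}, \eqref{expectaion_K} and Lemma~\ref{lemma_Var}; your bound is more self-contained and gives the same $O((\log n)^{-2})$ rate.

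Regarding the ``main technical hurdle'' you flag for condition~(i): the paper sidesteps it by using the \emph{almost sure} convergence $L_{\lfloor n^{t}\rfloor}/\log n\to 2t$ (Corollary~\ref{cor_Ln}), not merely convergence in probability. With almost sure convergence, the conditional avoidance probability $\exp\bigl(-\tfrac{1}{2\log n}\sum_k(u_1^k-s_1^k)(L_{\lfloor n^{u_2^k}\rfloor}-L_{\lfloor n^{s_2^k}\rfloor})\bigr)$ converges almost surely to the right constant, and dominated convergence (the integrand is bounded by $1$) immediately yields convergence of the expectation for the whole disjoint union at once. Upgrading your in-probability statement to almost sure convergence, or alternatively passing to subsequences, resolves the issue you anticipated.
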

Another criteria for convergence of random measures is the following
\begin{theorem}[Part of Theorem 16.16 of \cite{MR2161313}] \label{thm_Kallenberg}
Let $\{ \xi_n : n \in \mN \}$ be a sequence of random measures and let $\xi$ be a simple point process on a locally compact, second countable Hausdorff space $S$. Then $\xi_n \to_d \xi$ as $n \rightarrow \infty$ if and only if $\xi_n B \to_d \xi B$ as $n \rightarrow \infty$ for all $B \subseteq S$ which are relatively compact and satisfy $\xi \partial B = 0$ almost surely.
\end{theorem}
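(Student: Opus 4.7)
I would package the pair $(\Pi_n^S, \Pi_n^K)$ as a single random point process $\xi_n$ on the disjoint union $E := [0,\infty)^2 \sqcup [0,\infty)^2$ via $\xi_n(B^S \sqcup B^K) := \Pi_n^S(B^S) + \Pi_n^K(B^K)$, and similarly $\xi(B^S \sqcup B^K) := \Pi(B^S) + \Pi(B^K)$. Since $\Pi$ is a unit-intensity Poisson process on $[0,\infty)^2$, $\xi$ is simple on $E$, so Proposition~\ref{prop_Kallenberg} applies. The natural separating class $\mathcal{U}$ of relatively compact subsets of $E$ to choose consists of sets $B^S \sqcup B^K$ where each $B^\bullet$ is a finite disjoint union of bounded half-open rectangles with rational corners in $[0,\infty)^2$.

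The structural observation that makes the argument go through is that $\Pi_n^K \leq \Pi_n^S$ pointwise as random measures on $[0,\infty)^2$: reading off the definitions, the atoms of $\Pi_n^S$ are located at the points $(y_i\log n, \log k_i/\log n)$ as $(x_i,y_i)$ ranges over $\mathfrak{P}$, where $k_i$ is the index of the strip $[L_{k_i-1},L_{k_i})$ containing $x_i$, while $\Pi_n^K$ keeps only the atom of smallest $y$-coordinate per non-empty strip. Hence $\{\Pi_n^S B = 0\} \subseteq \{\Pi_n^K B = 0\}$ for every Borel $B$, and the reverse containment can fail only on the event $\mathscr{E}_n$ that some strip intersecting $B^K$ carries at least two points of $\mathfrak{P}$ in the corresponding height window $[0, T_1/\log n)$ (where $T_1$ upper-bounds the first coordinates of points in $B^K$). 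A Markov bound using $\mathbb{P}(\mathrm{Poisson}(m) \geq 2) \leq m^2/2$ together with $\mathbb{E}\tau_k^2 = 2/\binom{k}{2}^2 = O(k^{-4})$ gives $\mathbb{P}(\mathscr{E}_n) = O((\log n)^{-2})$. Combined with the elementary identity $\{\Pi_n^S B^S = 0, \Pi_n^S B^K = 0\} = \{\Pi_n^S(B^S \cup B^K) = 0\}$, this yields the sandwich
\begin{align*}
\mathbb{P}\bigl(\Pi_n^S(B^S \cup B^K) = 0\bigr) \leq \mathbb{P}(\xi_n U = 0) \leq \mathbb{P}\bigl(\Pi_n^S(B^S \cup B^K) = 0\bigr) + \mathbb{P}(\mathscr{E}_n).
\end{align*}

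It therefore remains, for condition (i) of Proposition~\ref{prop_Kallenberg}, to show $\mathbb{P}(\Pi_n^S B = 0) \to e^{-\lambda^2(B)}$ for $B$ a finite disjoint union of rectangles. Conditioning on the coalescent and exploiting independence of $\mathfrak{P}$ on disjoint preimage boxes reduces this to the single-rectangle limit
\begin{align*}
\mathbb{E}\exp\!\left(-\tfrac{1}{2}\bigl(L_{\lfloor n^{b'}\rfloor}-L_{\lfloor n^b\rfloor}\bigr)\tfrac{a'-a}{\log n}\right) \longrightarrow \exp\bigl(-(a'-a)(b'-b)\bigr),
\end{align*}
which follows by bounded convergence from the almost-sure law of large numbers $L_{\lfloor n^t\rfloor}/\log n \to 2t$ (monotone in $t$, hence uniform on compacta). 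Condition (ii) is a routine intensity computation: approximating each compact $C^\bullet$ from above by a finite union of rectangles with Lebesgue measure arbitrarily close to $\lambda^2(C^\bullet)$, using $\mathbb{E}\Pi_n^K C^K \leq \mathbb{E}\Pi_n^S C^K$ and $\mathbb{E}L_n \sim 2\log n$, gives $\limsup_n \mathbb{E}\xi_n C \leq \mathbb{E}\xi C$.

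The main subtlety, in my view, is the diagonality of the limit $(\Pi,\Pi)$: both marginals of $\xi$ are driven by the same Poisson realisation, so the joint void probabilities of $\xi_n$ on $U = B^S \sqcup B^K$ must, in the limit, depend only on $B^S \cup B^K$. This diagonality is already encoded at the level of $\Pi_n^S$ via $\{\Pi_n^S B^S = 0, \Pi_n^S B^K = 0\} = \{\Pi_n^S(B^S \cup B^K) = 0\}$; the bound on $\mathscr{E}_n$ is precisely what transfers it from the auxiliary pair $(\Pi_n^S,\Pi_n^S)$ to the genuine joint object $(\Pi_n^S,\Pi_n^K)$, and is the step I would expect to require the most care.
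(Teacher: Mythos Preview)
Your proposal does not address the stated theorem. Theorem~\ref{thm_Kallenberg} is a general convergence criterion for random measures quoted from Kallenberg's \emph{Foundations of Modern Probability}; the paper does not prove it but cites it as a black box. A proof would proceed through the vague topology on the space of locally finite measures and has nothing to do with the specific objects $\Pi_n^S$, $\Pi_n^K$, Kingman's coalescent, or the Poisson process~$\mathfrak P$.

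What you have actually sketched is a proof of Theorem~\ref{main_result2}, the joint convergence $(\Pi_n^S,\Pi_n^K)\to_d(\Pi,\Pi)$. Viewed as such, your argument is sound but takes a genuinely different route from the paper. The paper works modularly: it first establishes $\Pi_n^S\to_d\Pi$ via Proposition~\ref{prop_Kallenberg} (Theorem~\ref{main_result}); then shows $\Pi_n^S-\Pi_n^K\to_d 0$ by an $L^2$ estimate on $\Delta(n,t)=\Pi_n^S[0,t)-\Pi_n^K[0,t)$, computing mean and variance from Theorem~\ref{thm_pitters}, \eqref{expectaion_K} and Lemma~\ref{lemma_Var} and invoking Theorem~\ref{thm_Kallenberg} itself to pass from rectangles to general sets; finally it combines the two via Billingsley's Theorem~4.4 and the continuous map $f(x,y)=(x,x-y)$. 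You instead encode the pair as a single point process on the disjoint union and apply Proposition~\ref{prop_Kallenberg} to that joint object directly, exploiting the pointwise inequality $\Pi_n^K\le\Pi_n^S$ and the two-points-in-one-strip event $\mathscr E_n$ to sandwich the joint void probabilities between those of $\Pi_n^S$ alone. The paper's route is shorter and reuses off-the-shelf moment formulae; yours is more self-contained and makes the mechanism behind the diagonal limit $(\Pi,\Pi)$ explicit at the level of atoms.
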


We recall some properties of the length $L_n$ of Kingman's $n$-coalescent tree.
\begin{theorem}[Theorem 2.3 of \cite{MoehlePitters2015}]
Let $L_n$ be the length of Kingman's coalescent tree of size $n \geq 2$. Then as $n \rightarrow \infty$, $L_n/2 - \log n$ converges almost surely to a standard-Gumbel distribution
\end{theorem}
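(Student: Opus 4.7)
The plan is to couple all the lengths $(L_n)_{n\geq 2}$ on a single probability space via the standard exponential decomposition of Kingman's coalescent. If $T_k\sim\mathrm{Exp}(\binom{k}{2})$ denotes the holding time with $k$ lineages, then its length contribution $kT_k$ is $\mathrm{Exp}((k-1)/2)$, and setting
\begin{align*}
E_{k-1}\coloneqq \frac{(k-1)\,k\,T_k}{2},\qquad k\geq 2,
\end{align*}
yields independent standard exponentials $E_1,E_2,\ldots$ in terms of which
\begin{align*}
\frac{L_n}{2} \;=\; \sum_{j=1}^{n-1}\frac{E_j}{j}.
\end{align*}
In this coupling the claim becomes a statement about a single series of independent random variables, and "almost surely" refers to this common probability space.

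I then decompose
\begin{align*}
\frac{L_n}{2}-\log n \;=\; \sum_{j=1}^{n-1}\frac{E_j-1}{j} \;+\; (H_{n-1}-\log n),\qquad H_m\coloneqq\sum_{j=1}^m \frac{1}{j}.
\end{align*}
The summands $(E_j-1)/j$ are independent, centred, with variance $1/j^2$; since $\sum_{j\geq 1} j^{-2}<\infty$, Kolmogorov's convergence criterion yields almost sure convergence of
\begin{align*}
Z \;\coloneqq\; \sum_{j=1}^\infty \frac{E_j-1}{j}.
\end{align*}
Combined with $H_{n-1}-\log n\to\gamma$ (Euler--Mascheroni), this gives $L_n/2-\log n\to Z+\gamma$ almost surely.

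It remains to identify the law of $W\coloneqq Z+\gamma$ as standard Gumbel, i.e.\ with distribution function $\ec^{-\ec^{-x}}$. I would use characteristic functions: from independence and $\mE\,\ec^{\ic\lambda E/j}=(1-\ic\lambda/j)^{-1}$ for a standard exponential $E$, together with dominated convergence applied to the uniformly bounded partial products,
\begin{align*}
\mE\,\ec^{\ic\lambda W} \;=\; \ec^{\ic\lambda\gamma}\prod_{j=1}^\infty \frac{\ec^{-\ic\lambda/j}}{1-\ic\lambda/j} \;=\; \Gamma(1-\ic\lambda),
\end{align*}
the last equality being Weierstrass' product formula for the Gamma function. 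Since $\lambda\mapsto \Gamma(1-\ic\lambda)$ is the characteristic function of the standard Gumbel distribution, this pins down the limit.

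The main obstacle is this final identification rather than the almost sure convergence itself, which is immediate from Kolmogorov's criterion and the asymptotics of the harmonic numbers. If one wishes to sidestep the Weierstrass product, an alternative is to invoke the classical distributional identity $\sum_{j=1}^n E_j/j \stackrel{d}{=}\max(E_1,\ldots,E_n)$ (a consequence of Rényi's representation of exponential order statistics) and then apply the standard extreme-value limit for i.i.d.\ exponentials to recognise the Gumbel law on the right-hand side.
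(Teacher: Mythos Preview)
Your argument is correct. The coupling via $L_n/2=\sum_{j=1}^{n-1}E_j/j$ with i.i.d.\ standard exponentials $E_j$, followed by Kolmogorov's one-series criterion and the identification of the limiting law via the Weierstrass product (or equivalently via R\'enyi's representation of exponential order statistics), is the standard route to this result and all steps are sound.

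Note, however, that the paper does \emph{not} supply its own proof of this statement: it is quoted verbatim as Theorem~2.3 of~\cite{MoehlePitters2015} and used as a black box (only its Corollary~\ref{cor_Ln}, namely $L_n/\log n\to 2$ almost surely, is actually needed downstream). So there is no proof in the paper to compare your proposal against; what you have written is essentially the argument one finds in the cited reference.
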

For our purpose the following simple consequence will be important.
\begin{corollary} \label{cor_Ln}
The sequence of random variables $L_n/ \log n$ converges almost surely to $2$ as $n \rightarrow \infty$.
\end{corollary}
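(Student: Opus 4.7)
The plan is to derive this directly from the preceding theorem, which asserts that $L_n/2 - \log n$ converges almost surely to a standard-Gumbel distributed random variable $G$ as $n \to \infty$. I would simply rearrange this identity: writing
\begin{align*}
  \frac{L_n}{\log n} = 2 + \frac{2(L_n/2 - \log n)}{\log n},
\end{align*}
the almost sure convergence $L_n/2 - \log n \to G$ implies that the sequence $(L_n/2 - \log n)_{n\geq 2}$ is almost surely bounded (since its almost sure limit $G$ is finite almost surely). Dividing a bounded sequence by $\log n \to \infty$ yields a sequence converging to $0$ almost surely, and the claim follows.

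There is no real obstacle here; the statement is a deterministic consequence of the stronger Gumbel limit cited just above. The only minor point worth making explicit in the write-up is that almost sure convergence to a finite random variable implies almost sure boundedness of the sequence (on a probability-one event), which is what allows one to discard the $(L_n/2 - \log n)/\log n$ term in the limit. No further ingredients about Kingman's coalescent, the representation \eqref{Ln_exp}, or independence of the $\tau_k$ are needed for this corollary.
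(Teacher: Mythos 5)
Your argument is correct and is exactly the intended one: the paper presents the corollary as a ``simple consequence'' of the cited Gumbel limit and omits the proof, and your rearrangement $L_n/\log n = 2 + 2(L_n/2-\log n)/\log n$ together with the almost sure finiteness of the Gumbel limit is precisely that deduction. Nothing further is needed.
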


From~\eqref{Ln_exp} we immediately obtain
\begin{align}\label{lemma_ELn}
\mE L_n = 2 \sum_{k=1}^{n-1} \frac{1}{k}, \quad n \in \mN .
\end{align}
A direct consequence of this lemma is the following corollary.
\begin{corollary} \label{cor_ELn}
For $t_2 \geq 0$, $\mE L_{\lfloor n^{t_2} \rfloor} /(2 \log n)$ converges to $t_2$, as $n \rightarrow \infty$.
\end{corollary}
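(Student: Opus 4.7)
The plan is to reduce the claim to the classical asymptotics of the harmonic numbers, using \eqref{lemma_ELn} as the only nontrivial input. First I would dispose of the degenerate case $t_2 = 0$, where $\lfloor n^{t_2}\rfloor = 1$, so by the defining sum in~\eqref{Ln_exp} (which is empty for $n=1$) we have $L_1 = 0$ and the quotient is identically zero, matching the limit $t_2 = 0$.

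For $t_2 > 0$, for all $n$ sufficiently large one has $\lfloor n^{t_2}\rfloor \geq 2$, so \eqref{lemma_ELn} gives
\begin{align*}
  \frac{\mE L_{\lfloor n^{t_2}\rfloor}}{2 \log n}
  \;=\; \frac{1}{\log n}\sum_{k=1}^{\lfloor n^{t_2}\rfloor - 1}\frac{1}{k}
  \;=\; \frac{H_{\lfloor n^{t_2}\rfloor - 1}}{\log n},
\end{align*}
where $H_m$ denotes the $m$th harmonic number. The well known asymptotic $H_m = \log m + \gamma + O(1/m)$ as $m\to\infty$ applied to $m = \lfloor n^{t_2}\rfloor - 1$ then yields
\begin{align*}
  \frac{\mE L_{\lfloor n^{t_2}\rfloor}}{2\log n} \;=\; \frac{\log(\lfloor n^{t_2}\rfloor - 1) + \gamma + o(1)}{\log n}.
\end{align*}

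The only remaining step is to verify that $\log(\lfloor n^{t_2}\rfloor - 1) = t_2 \log n + o(1)$, which follows immediately from the sandwich $n^{t_2} - 2 \leq \lfloor n^{t_2}\rfloor - 1 \leq n^{t_2}$ upon taking logarithms. Dividing by $\log n$ and sending $n\to\infty$ then gives $t_2$, as required. There is no real obstacle in this argument; it is purely an exercise in the asymptotics of $H_m$, with the factor of $2$ in the denominator precisely cancelling the $2$ in~\eqref{lemma_ELn}.
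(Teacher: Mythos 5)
Your argument is correct and is exactly the derivation the paper intends: the corollary is stated as a ``direct consequence'' of \eqref{lemma_ELn}, and filling that in amounts precisely to the harmonic-number asymptotics $H_m = \log m + \gamma + O(1/m)$ together with $\log\lfloor n^{t_2}\rfloor = t_2\log n + o(1)$, as you do. The separate treatment of $t_2=0$ and the sandwich bound for the floor are both sound, so there is nothing to add.
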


We also need the following crucial bound on the mean increments of $t\mapsto L_{\lfloor n^t\rfloor}$.
\begin{lemma}[Lemma 4 of \cite{Pitters2019}] \label{lemma_cdf}
Let $n \in \mN$ and $H_n^* := \sum_{k=2}^n 1/k$ for $n \geq 1$ and $H_0^* := 0$. Then the function
$$F_n(t) := \frac{H_{\lfloor n^{t} \rfloor-1}^*}{\log n}, \quad t \in [0,1]$$
is a distribution function on $[0,1]$ with $F_n(t) \leq t$ for all $t \in [0,1]$. Consequently, one has
$$\mu_n(A) \leq \lambda(A), \quad A \in \cB([0,1]),$$
where $\mu_n$ is the finite measure induced by $F_n$ and $\lambda$ is the Lebesgue measure.
\end{lemma}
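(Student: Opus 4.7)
My plan is to verify the three assertions in turn. First, $F_n$ is a distribution function on $[0,1]$: monotonicity is inherited from the facts that $t\mapsto\lfloor n^t\rfloor$ and $m\mapsto H_m^*$ are both non-decreasing, right-continuity follows from the right-continuity of the floor function (and constancy of $H_m^*$ between integers), and the normalisation $F_n(0) = H_0^*/\log n = 0$ is immediate from the convention $H_0^* = 0$.

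The heart of the argument is the pointwise inequality $F_n(t)\leq t$. For this I would use the elementary integral estimate
\begin{align*}
H_m^* = \sum_{k=2}^{m}\frac{1}{k} \leq \int_1^m \frac{\dd x}{x} = \log m,
\end{align*}
valid for every integer $m\geq 1$, obtained by majorising each $1/k$ by $\int_{k-1}^k \dd x/x$. Setting $m = \lfloor n^t\rfloor - 1$ (and disposing of the trivial cases $\lfloor n^t\rfloor \in \{1,2\}$ where the conventions $H_0^* = H_1^* = 0$ force $F_n(t) = 0$ directly) yields
\begin{align*}
H_{\lfloor n^t\rfloor-1}^* \leq \log(\lfloor n^t\rfloor-1) \leq \log(n^t) = t\log n,
\end{align*}
and dividing through by $\log n$ gives $F_n(t)\leq t$.

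The measure-theoretic dominance $\mu_n(A)\leq \lambda(A)$ for all $A\in\cB([0,1])$ is the subtler part. The natural route is to upgrade the pointwise bound to the stronger increment inequality $F_n(b)-F_n(a)\leq b-a$ for every $0\leq a\leq b\leq 1$: since the half-open intervals form a $\pi$-system generating $\cB([0,1])$ and $\mu_n$ is finite, such an estimate on intervals extends to all Borel sets by the standard $\pi$-$\lambda$/Carath\'eodory extension machinery. I expect this refinement to be the main obstacle: on the short scales between successive jump points $t_k = \log_n k$ each jump has size $1/((k-1)\log n)$, which sits in delicate balance with the gap $\log(k/(k-1))/\log n$, so a naive termwise comparison does not close. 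A careful aggregation over groups of jumps, combined with the fact established in step two that $F_n$ lies comfortably below the diagonal in between jumps, should be enough to absorb the excess and complete the argument.
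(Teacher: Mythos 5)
The paper itself offers no proof of this lemma --- it is imported verbatim by citation as Lemma 4 of \cite{Pitters2019} --- so there is no in-paper argument to compare against and your proposal has to be judged on its own terms. Your first two steps are correct and complete: monotonicity, right-continuity and $F_n(0)=0$ give the (sub-probability) distribution function property, and the chain $H^*_{\lfloor n^t\rfloor-1}\le\log(\lfloor n^t\rfloor-1)\le\log n^t=t\log n$, with the degenerate cases $\lfloor n^t\rfloor\in\{1,2\}$ disposed of separately, proves $F_n(t)\le t$.

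The gap is in your third step, and you have located it accurately but misjudged its nature: no ``careful aggregation over groups of jumps'' can close it, because the assertion $\mu_n(A)\le\lambda(A)$ for all $A\in\cB([0,1])$ is false. The measure $\mu_n$ induced by the step function $F_n$ is purely atomic, with an atom of mass $\frac{1}{(k-1)\log n}$ at each jump point $t_k=\log k/\log n$ for $3\le k\le n$, so $\mu_n(\{t_k\})>0=\lambda(\{t_k\})$; equivalently, the increment bound $F_n(b)-F_n(a)\le b-a$ fails for any short interval straddling a jump. Concretely, for $n=100$, $a=0.23$, $b=0.239$ one has $\lfloor n^a\rfloor=2$ and $\lfloor n^b\rfloor=3$, whence $F_n(b)-F_n(a)=\frac{1/2}{\log 100}\approx 0.109$ while $b-a=0.009$. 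The ``consequently'' in the statement is a non sequitur: $F_n(t)\le t$ controls $\mu_n$ only on sets of the form $[0,t)$, and differences of such inequalities do not preserve their direction, so nothing follows for general Borel sets. What is actually true, and what the downstream uses of the lemma (display \eqref{eq_harmonic} and the dominated-convergence bound built on it) require, is a corrected estimate with a one-term error, e.g.\ $\mu_n([s,t))\le(t-s)+O\bigl(1/(\lfloor n^s\rfloor\log n)\bigr)$, obtained by splitting off the first summand of the harmonic block before applying your integral comparison; the extra term is harmless in the limits taken later. So: steps one and two stand, step three should not be completed as stated but replaced by this weakened (and sufficient) form of the conclusion.
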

A consequence of this lemma is
\begin{align*}
\frac{1}{\log n}(H_{\lfloor n^{t} \rfloor-1}^* - H_{\lfloor n^{s} \rfloor-1}^*) = \mu_n([s,t)) \leq t-s, \quad s,t \in [0,1], s \leq t
\end{align*}
and a slight generalisation (replacing $t \in [0,1]$ by $t \in [0,c]$ for an arbitrary $c>0$ in Lemma \ref{lemma_cdf}) even shows
\begin{align}\label{eq_harmonic}
\frac{1}{\log n}(H_{\lfloor n^{t} \rfloor-1}^* - H_{\lfloor n^{s} \rfloor-1}^*) \leq t-s, \quad s,t \in [0,\infty), s \leq t.
\end{align}

Now we restate the expectation and the variance of $S$ given in~\cite{Watterson1975}.
\begin{theorem}[\cite{Watterson1975}]\label{thm_pitters}
For $n \in \mN$ and $t \in [0,\infty)^2$ it holds
\begin{align*}
\mE S(n,t)&= t_1 H_{\lfloor n^{t_2} \rfloor-1}, \\
\Var S(n,t) &= t_1 H_{\lfloor n^{t_2} \rfloor-1} + t_1^2 H^{(2)}_{\lfloor n^{t_2} \rfloor-1},
\end{align*}
where $H^{(b)}_n := \sum_{k=1}^n 1/k^b$ for $n \in \mN$ and $b>0$ and $H_n := H_n^{(1)}$.
\end{theorem}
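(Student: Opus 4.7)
The plan is to exploit the two-stage construction of $S(n,t)$: first sample the coalescent tree, then overlay a Poisson process of intensity $t_1/2$ on its branches. Write $m\coloneqq\lfloor n^{t_2}\rfloor$. Conditional on the tree (equivalently, conditional on its total length $L_m$), each segregating site corresponds to a point of a Poisson process of intensity $t_1/2$ on the tree, and under the infinitely-many-sites model distinct mutations produce distinct segregating sites. Hence, conditionally on $L_m$,
\begin{align*}
S(n,t)\mid L_m\ \sim\ \mathrm{Poisson}\!\left(\tfrac{t_1}{2}\,L_m\right).
\end{align*}
Consequently $\mE[S(n,t)\mid L_m]=\Var(S(n,t)\mid L_m)=(t_1/2)L_m$, and the problem reduces entirely to computing the mean and variance of $L_m$.

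For the mean, I just invoke~\eqref{lemma_ELn}: $\mE L_m=2H_{m-1}$, so by the tower property
\begin{align*}
\mE S(n,t)\ =\ \mE\!\left[\tfrac{t_1}{2}L_m\right]\ =\ t_1 H_{m-1},
\end{align*}
which is the first claim. For the variance I use the law of total variance,
\begin{align*}
\Var S(n,t)\ =\ \mE\!\left[\Var\!\big(S(n,t)\mid L_m\big)\right]+\Var\!\big(\mE[S(n,t)\mid L_m]\big)\ =\ \tfrac{t_1}{2}\mE L_m+\tfrac{t_1^2}{4}\Var L_m.
\end{align*}
So I need $\Var L_m$. Starting from the representation in~\eqref{Ln_exp}, $L_m$ is a sum of independent exponential contributions coming from the phases of the coalescent (the $k$-lineage phase contributes $kT_k$ with $T_k$ exponential of rate $\binom{k}{2}$, equivalently an $\mathrm{Exp}((k-1)/2)$ variable), which is consistent with $\mE L_m=\sum_{k=2}^{m}2/(k-1)=2H_{m-1}$. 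By independence of the phase contributions,
\begin{align*}
\Var L_m\ =\ \sum_{k=2}^{m}\frac{4}{(k-1)^2}\ =\ 4H^{(2)}_{m-1}.
\end{align*}

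Plugging these two moment computations back into the total-variance identity yields
\begin{align*}
\Var S(n,t)\ =\ \tfrac{t_1}{2}\cdot 2H_{m-1}+\tfrac{t_1^2}{4}\cdot 4H^{(2)}_{m-1}\ =\ t_1 H_{m-1}+t_1^2 H^{(2)}_{m-1},
\end{align*}
which is the second claim. There is no real obstacle here: the only care required is to make sure that the phase-length decomposition one uses for $\Var L_m$ is the one consistent with~\eqref{lemma_ELn}, i.e.\ that the contribution of the $k$-lineage phase has mean $2/(k-1)$ and variance $4/(k-1)^2$. Everything else is an application of the tower property and of the Poisson identity $\mathrm{mean}=\mathrm{variance}$ conditional on the tree length.
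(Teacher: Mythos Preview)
Your argument is correct. The paper does not supply its own proof of this statement; it simply quotes the result from Watterson~\cite{Watterson1975}, so there is nothing to compare against beyond noting that your conditional-Poisson/tower-property derivation is exactly the classical route to these formulas. One small remark: you rightly reinterpreted the decomposition~\eqref{Ln_exp}. As written there, $\tau_k\sim\mathrm{Exp}\!\big(\binom{k}{2}\big)$ would give $\mE L_m=2(1-1/m)$, not $2H_{m-1}$; the intended reading (consistent with~\eqref{lemma_ELn}) is $\tau_k=kT_k$ with $T_k\sim\mathrm{Exp}\!\big(\binom{k}{2}\big)$, i.e.\ $\tau_k\sim\mathrm{Exp}((k-1)/2)$, which is precisely what you used to get $\Var L_m=4H^{(2)}_{m-1}$.
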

We will also need the expectation of $K(n,t)$, which is
\begin{align}\label{expectaion_K}
\mE K(n,t) = 1+ \sum_{k=2}^{\lfloor n^{t_2} \rfloor} \frac{t_1}{t_1+k-1}.
\end{align}
This is a simple consequence of the representation
$$K(n,t) =_d 1+ \sum_{k=2}^{\lfloor n^{t_2} \rfloor} B_k(t_1),$$
where $(B_k(t_1))_{k \geq 2}$ is a sequence of Bernoulli random variables having success parameter $t_1 /(t_1+k-1)$, cf.~Theorem 3 in \cite{Pitters2019}. This representation is sometimes referred to as Feller's coupling.

\smallskip

Finally, we compute the variance of $S(n,t) - K(n,t)$.
\begin{lemma}\label{lemma_Var}
It holds that
$$\Var(S(n,t) - K(n,t)) = \sum_{k=2}^{\lfloor n^{t_2}\rfloor } \frac{t_1^2 \big(t_1^2 + 3 t_1(k-1) + (k-1)^2  \big)}{(k-1)^2(k-1+t_1)^2}.$$
\end{lemma}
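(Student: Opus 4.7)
The plan is to work inside the Poisson coupling constructed in the preliminaries, applied with the original parameter $t_1$ (i.e.\ without the $1/\log n$ scaling). For $k=2,\ldots,\lfloor n^{t_2}\rfloor$ introduce
$$N_k := \card\bigl(\mathfrak P\cap [L_{k-1},L_k)\times[0,t_1)\bigr), \qquad M_k := \1\{N_k\geq 1\},$$
so that in this coupling $S(n,t)=_d\sum_k N_k$ and $K(n,t)-1=_d\sum_k M_k$, and consequently
$$S(n,t)-K(n,t)+1\ =_d\ \sum_{k=2}^{\lfloor n^{t_2}\rfloor}(N_k-M_k).$$
The summands are independent across $k$: the $\tau_k$'s are independent, and conditionally on them the Poisson counts on the disjoint strips $[L_{k-1},L_k)\times[0,t_1)$ are independent. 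Hence the variance of the sum equals the sum of the variances, and it suffices to compute $\Var(N_k-M_k)$ for a single $k$.

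Next, I would identify the marginal law of $N_k$. Given $\tau_k$, the variable $N_k$ is Poisson with mean $\tau_k t_1/2$; since $\tau_k$ is exponentially distributed, a one-line Laplace-transform computation shows that $N_k$ is unconditionally geometric on $\{0,1,2,\ldots\}$ with $\mP(N_k=0)=(k-1)/(k-1+t_1)$. In particular $M_k$ is Bernoulli with parameter $p_k:=t_1/(k-1+t_1)$, which is precisely the probability appearing in Feller's coupling for $K(n,t)$ and confirms the internal consistency of the set-up.

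It then remains to evaluate $\Var(N_k-M_k)=\Var N_k+\Var M_k-2\,\mathrm{Cov}(N_k,M_k)$. The geometric and Bernoulli variances are textbook; the one identity worth singling out is
$$N_k M_k=N_k,$$
which holds because $M_k=0$ forces $N_k=0$. From this, $\mE[N_k M_k]=\mE N_k$ and therefore $\mathrm{Cov}(N_k,M_k)=\mE N_k\,(1-p_k)=t_1/(k-1+t_1)$. With all three quantities in hand, the anticipated obstacle is purely algebraic: bringing the three fractions over the common denominator $(k-1)^2(k-1+t_1)^2$, one has to check that the numerator collapses to $t_1^2\bigl(t_1^2+3t_1(k-1)+(k-1)^2\bigr)$. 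Summing over $k$ then yields the claimed formula.
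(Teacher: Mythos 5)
Your proof is correct and follows essentially the same route as the ``direct calculation'' that the paper delegates to the proof of Theorem 1 in Pitters (2019): decompose $S-K+1$ in the Poisson coupling into the independent per-level terms $N_k-M_k$, identify $N_k$ as geometric and $M_k$ as Bernoulli with parameter $t_1/(k-1+t_1)$, exploit $N_kM_k=N_k$, and check that the algebra collapses to the stated numerator (it does: $(a+\theta)^3+a^3-2a^2(a+\theta)=\theta(a^2+3a\theta+\theta^2)$ with $a=k-1$, $\theta=t_1$). One caveat worth flagging: your value $\mP(N_k=0)=(k-1)/(k-1+t_1)$ presupposes $\tau_k\sim\mathrm{Exp}((k-1)/2)$ rather than the rate $\binom{k}{2}$ printed in \eqref{Ln_exp}; the latter is a typo in the paper (it is inconsistent with $\mE L_n=2H_{n-1}$ in \eqref{lemma_ELn} and with the Gumbel limit for $L_n/2-\log n$), and your choice is the one consistent with Feller's coupling, so your computation stands.
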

\begin{proof}
  This is a direct calculation, see e.g.~the proof of Theorem 1 in~\cite{Pitters2019}.
\end{proof}

\section{Proofs}
Before we prove our convergence results we need to verify that $\Pi^S_n$ and $\Pi^K_n$ can be extended to random measures.
\begin{proof}[Proof of Proposition \ref{prop_measures}]
We apply a Caratheodory type extension for random measures (see e.g. Proposition 1.9.33 of Molchanov \cite{Mol_01}). For that, we have to extend $\Pi^S_n$ and $\Pi^K_n$ to a ring of subsets of $[0,\infty)^2$. Since $\Pi^S_n$ and $\Pi^K_n$ should be (almost surely) additive on this ring we have to do the following.
First, we generalize $\Pi^S_n$ and $\Pi^K_n$ to
$$\mathcal{Q}:= \Big\{[s_1,t_1) \times [s_2,t_2) : 0 \leq s_1 \leq t_1 \leq 1, 0 \leq s_2 \leq t_2 \leq 1\Big\} ,$$
which is the set of rectangles in $[0,\infty)^2$ excluding their upper and right boundary. For $[s,t) :=[s_1,t_1) \times [s_2,t_2) \in \mathcal{Q}$ we split up this set in the following way:
$$[s,t) = \Big([0,t)\setminus \big([0,s_1)\times [0,t_2) \cup [0,t_1) \times [0,s_2) \big) \Big) \cup [0,s).$$
Then we use the definition of $\Pi^S_n$ and $\Pi^K_n$ evaluated on the appearing sets on the right-hand side:
\begin{align*}
\Pi^S_n[s,t) := \Pi^S_n[0,t) + \Pi^S_n[0,s) - \big(\Pi^S_n([0,t_1) \times [0,s_2)) + \Pi^S_n([0,s_1) \times [0,t_2)) \big)
\end{align*}
and the analogue for $\Pi_n^K$:
\begin{align*}
\Pi^K_n[s,t) := \Pi^K_n[0,t)  + \Pi^K_n[0,s) - \big(\Pi^K_n([0,t_1) \times [0,s_2)) + \Pi^K_n([0,s_1) \times [0,t_2)) \big).
\end{align*}
Now consider
\begin{align}\label{set_R}
\mathcal{R} := \Big\{ \bigcup_{k=1}^n B_k : B_1,\ldots,B_n \in \mathcal{Q} \text{ disjoint }, n \in \mN \Big\},
\end{align}
which is the set of disjoint unions of rectangles in $[0,\infty)^2$. We still would like to have that $\Pi_n^S$ and $\Pi_n^K$ are additive (almost surely). So we need for $B = \bigcup_{k=1}^n B_k$ with $B_1,\ldots,B_n \in \mathcal{Q}$ disjoint:
$$\Pi^S_n B := \sum_{k=1}^n \Pi^S_n B_k \quad \text{and} \quad \Pi^K_n B := \sum_{k=1}^n \Pi^K_n B_k.$$
By this construction, $\Pi_n^S$ and $\Pi_n^K$ are $\sigma$-additive random functions on the ring $\mathcal{R}$ of subsets of $[0,\infty)$. Using the above mentioned version of Caratheodory's extension theorem for random measures, it follows that $\Pi^S_n$ and $\Pi_n^K$ can be extended from $\mathcal{R}$ to random measures on $\cB(\mathcal{R}) = \mathcal{B}([0,\infty)^2)$. 
\end{proof}

We go on with the following lemma which is needed to apply Proposition \ref{prop_Kallenberg} and Theorem \ref{thm_Kallenberg}.
\begin{lemma}
$\Pi^S_n$, $\Pi^K_n$ and $\Pi$ are simple point processes, i.e. point processes which fulfil $\Pi^S_n\{t \}, \Pi^K_n\{t \}, \Pi\{t \} \leq 1$ for $t=(t_1,t_2) \in [0,\infty)^2$.
\end{lemma}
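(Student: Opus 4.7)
The plan is to reduce all three claims to the almost sure simplicity of the driving Poisson process $\mathfrak{P}$, and more specifically to the fact that its atoms have pairwise distinct first and second coordinates almost surely. Both are standard consequences of the intensity $\lambda^2/2$ being non-atomic: conditioning on the number of atoms of $\mathfrak{P}$ in any bounded rectangle, the atoms are i.i.d.\ uniform on that rectangle, and any two i.i.d.\ samples from a product of diffuse one-dimensional measures share a coordinate with probability zero; a $\sigma$-finite exhaustion of $[0,\infty)^2$ by bounded rectangles then delivers the global statement.

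The claim for $\Pi$ is then immediate, since $\Pi$ is itself a Poisson process with the diffuse intensity $\lambda^2$, and the very same argument yields $\Pi\{t\}\le 1$ for all $t\in[0,\infty)^2$ almost surely.

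For $\Pi_n^S$ I would enumerate the atoms of $\mathfrak{P}$ as $(X_i,Y_i)_{i\ge 1}$ and, for each $i$, let $K_i\ge 2$ be the (a.s.\ unique) integer with $X_i\in[L_{K_i-1},L_{K_i})$. Unpacking the definition of $\Pi_n^S$ shows that, as a point measure on $[0,\infty)^2$, $\Pi_n^S$ places its atoms precisely at the image points of the map $(X_i,Y_i)\mapsto(Y_i\log n,\,\log K_i/\log n)$; two distinct atoms of $\mathfrak{P}$ can therefore collapse to the same atom of $\Pi_n^S$ only if $Y_i=Y_j$, which fails almost surely by the first paragraph. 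Hence $\Pi_n^S\{t\}\le 1$ for all $t$ a.s. For $\Pi_n^K$, the defining sum has exactly one indicator per integer $k$, so the $k$-th summand contributes at most one atom to $\Pi_n^K$, located at second coordinate $\log k/\log n$; since these values are pairwise distinct in $k$, no two atoms of $\Pi_n^K$ can coincide, and simplicity follows.

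The only (minor) technical point is to translate the defining formulas into clean image-measure descriptions while respecting the half-open/closed boundary conventions used in the paper; after that, simplicity drops out of the diffuseness of $\lambda^2/2$ with no further work.
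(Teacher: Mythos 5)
Your proof is correct and rests on the same underlying fact as the paper's proof --- the diffuseness of the intensity $\lambda^2/2$ of the driving process $\mathfrak{P}$ --- but you carry the reduction out considerably more carefully. The paper's entire argument for simplicity is the remark that $\mathfrak{P}$ ``has almost surely no mass on points''; taken literally this only gives simplicity of $\mathfrak{P}$ itself and does not by itself exclude the possibility that two \emph{distinct} atoms of $\mathfrak{P}$ are mapped to the \emph{same} atom of $\Pi_n^S$. Your image-measure description (atoms of $\Pi_n^S$ at $(Y_i\log n,\,\log K_i/\log n)$) pinpoints the property that is actually needed, namely that distinct atoms of $\mathfrak{P}$ almost surely have distinct second coordinates, which does follow from the conditional-uniformity (or second factorial moment) argument you sketch. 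Likewise, your observation that the $k$-th summand defining $\Pi_n^K$ contributes at most one atom, located at second coordinate $\log k/\log n$, and that these values are pairwise distinct in $k$, is the clean way to handle $\Pi_n^K$, a case the paper does not treat separately from $\Pi_n^S$. The claim for $\Pi$ is immediate in both treatments. The only point you omit that the paper includes is the (trivial) preliminary observation that all three random measures are integer-valued on Borel sets and hence are point processes in the first place; the boundary-convention issue you flag at the end is indeed harmless, since it only shifts whether an atom is captured by $[s_2,t_2)$ or $(s_2,t_2]$ and does not affect the location or multiplicity of atoms.
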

\begin{proof}
We already know that the $\Pi$'s are random measures. Since their evaluations on all Borel sets $B \in \cB([0,\infty)^2)$ are obviously integer-valued they are point processes. It remains to show that they are simple, which means that the evaluation on singletons is less than $1$. But this follows by the fact that the Poisson process $\mathfrak{P}$ has almost surely no mass on points because its intensity measure is $1/2 \lambda^2$.
\end{proof}

\begin{theorem}\label{main_result}
The sequence of random measures $\{\Pi^S_n, n \in \mN\}$ converges weakly to $\Pi$ for $n \rightarrow \infty$, where $\Pi$ is a Poisson random measure on $[0,\infty)^2$ with intensity measure $\lambda^2$.
\end{theorem}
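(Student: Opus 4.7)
The plan is to apply Proposition \ref{prop_Kallenberg} with the target point process $\xi=\Pi$, which is simple. For the separating class $\mathcal{U}$ I take the finite disjoint unions $\bigsqcup_{k=1}^m [s_1^{(k)},t_1^{(k)})\times[s_2^{(k)},t_2^{(k)})$ with rational corners in $[0,\infty)^2$. Each such set is relatively compact, and a standard cover-and-refine argument (finite subcover by small rational rectangles followed by grid refinement) shows that $\mathcal{U}$ separates compacta from open neighborhoods.

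Verification of condition (i). Fix $U=\bigsqcup_{k=1}^m B_k\in\mathcal{U}$ with $B_k=[s_1^{(k)},t_1^{(k)})\times[s_2^{(k)},t_2^{(k)})$. Unfolding the additive extension constructed in Proposition \ref{prop_measures} together with the definition of $\Pi_n^S$ gives
$$\Pi_n^S U = \#\Big(\mathfrak{P}\cap\bigsqcup_{k=1}^m[L_{\lfloor n^{s_2^{(k)}}\rfloor},L_{\lfloor n^{t_2^{(k)}}\rfloor})\times[s_1^{(k)}/\log n,t_1^{(k)}/\log n)\Big),$$
and disjointness of the image rectangles is preserved because $t_2\mapsto L_{\lfloor n^{t_2}\rfloor}$ is monotone. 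Conditionally on the sequence $(L_j)_{j\geq 2}$, which is independent of $\mathfrak{P}$, this is a Poisson variable of mean $\tfrac{1}{2\log n}\sum_k (t_1^{(k)}-s_1^{(k)})(L_{\lfloor n^{t_2^{(k)}}\rfloor}-L_{\lfloor n^{s_2^{(k)}}\rfloor})$, so
$$\mP(\Pi_n^S U=0)=\mE\exp\!\Big(-\tfrac{1}{2\log n}\sum_{k=1}^m (t_1^{(k)}-s_1^{(k)})\big(L_{\lfloor n^{t_2^{(k)}}\rfloor}-L_{\lfloor n^{s_2^{(k)}}\rfloor}\big)\Big).$$
Corollary \ref{cor_Ln} supplies $L_{\lfloor n^t\rfloor}/\log n\to 2t$ a.s.\ for each fixed $t$, so intersecting the finitely many exceptional null sets attached to the corners of $U$ makes the exponent tend a.s.\ to $-\sum_k(t_1^{(k)}-s_1^{(k)})(t_2^{(k)}-s_2^{(k)})=-\lambda^2(U)$. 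Since the integrand lies in $[0,1]$, bounded convergence yields $\mP(\Pi_n^S U=0)\to e^{-\lambda^2(U)}=\mP(\Pi U=0)$.

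Verification of condition (ii). For a compact $C\subset[0,\infty)^2$ and $\varepsilon>0$, outer regularity of $\lambda^2$ produces $U\in\mathcal{U}$ with $C\subset U$ and $\lambda^2(U)\leq\lambda^2(C)+\varepsilon$; combining monotonicity with \eqref{lemma_ELn} gives
$$\mE\Pi_n^S C \leq \mE\Pi_n^S U = \sum_{k=1}^m (t_1^{(k)}-s_1^{(k)})\,\frac{H_{\lfloor n^{t_2^{(k)}}\rfloor-1}^{\ast}-H_{\lfloor n^{s_2^{(k)}}\rfloor-1}^{\ast}}{\log n},$$
and the uniform estimate \eqref{eq_harmonic} bounds each summand by $(t_1^{(k)}-s_1^{(k)})(t_2^{(k)}-s_2^{(k)})$. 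Hence $\mE\Pi_n^S C\leq\lambda^2(C)+\varepsilon$, and letting $\varepsilon\downarrow 0$ gives $\limsup_n\mE\Pi_n^S C\leq\lambda^2(C)=\mE\Pi C$. Both hypotheses of Proposition \ref{prop_Kallenberg} are now in place, so the theorem follows. The only piece that warrants real care is the bookkeeping at the interface between the coalescent lengths and the Poisson background: the factor of $2$ coming from $L_{\lfloor n^t\rfloor}/\log n\to 2t$ must cancel precisely the intensity $\lambda^2/2$ of $\mathfrak{P}$, producing the unit-intensity limit $\Pi$.
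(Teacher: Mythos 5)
Your proof is correct and rests on the same pillar as the paper's, namely Proposition \ref{prop_Kallenberg}, with condition (i) handled in essentially the same way: both arguments condition on the coalescent lengths, exploit the Poisson structure of $\mathfrak{P}$ to get an explicit conditional avoidance probability $\exp\bigl(-\tfrac{1}{2\log n}\sum_k(t_1^{(k)}-s_1^{(k)})(L_{\lfloor n^{t_2^{(k)}}\rfloor}-L_{\lfloor n^{s_2^{(k)}}\rfloor})\bigr)$, and then invoke Corollary \ref{cor_Ln} and bounded convergence. (The paper factors the conditional probability as a product over the $B_k$; you instead observe that the total count is conditionally Poisson over the disjoint union — these are the same computation.) Where you genuinely diverge is condition (ii). The paper proves the stronger statement $\lim_n\mE\Pi_n^S C=\lambda^2(C)$ for every compact $C$, which forces it to decompose $C$ into countably many rectangles, justify an interchange of limit and sum by dominated convergence (using \eqref{eq_harmonic} to build the dominating sequence), and run a separate $\eps$-argument for the part of $C$ near the axis $t_2=0$. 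You only prove the $\limsup$ inequality that Proposition \ref{prop_Kallenberg} actually asks for: sandwich $C$ inside a finite disjoint union $U$ of rectangles with $\lambda^2(U)\le\lambda^2(C)+\eps$, compute $\mE\Pi_n^S U$ exactly from \eqref{lemma_ELn}, and bound it uniformly in $n$ by $\lambda^2(U)$ via \eqref{eq_harmonic}. This is shorter and avoids both the countable decomposition and the case analysis; the paper's extra work buys the exact first-moment limit, which it does not subsequently need. Two small points you should tighten: first, your identity $\mE L_{\lfloor n^{t_2}\rfloor}-\mE L_{\lfloor n^{s_2}\rfloor}=2\bigl(H^*_{\lfloor n^{t_2}\rfloor-1}-H^*_{\lfloor n^{s_2}\rfloor-1}\bigr)$ picks up an extra additive constant $2$ when $\lfloor n^{s_2^{(k)}}\rfloor=1$ (i.e.\ a rectangle touching the axis), since then the $1/1$ term of the harmonic sum does not cancel; this contributes only $O(1/\log n)$ to $\mE\Pi_n^S B_k$ and so is harmless for the $\limsup$, but it should be acknowledged. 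Second, your one-line claim that disjointness of the $B_k$ is preserved under $(t_1,t_2)\mapsto\bigl(L_{\lfloor n^{t_2}\rfloor},t_1/\log n\bigr)$ deserves the remark that disjoint axis-aligned rectangles must have disjoint projections in at least one coordinate, and that coordinate's disjointness survives because the relevant map ($t_2\mapsto L_{\lfloor n^{t_2}\rfloor}$ or $t_1\mapsto t_1/\log n$) is monotone on half-open intervals.
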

\begin{proof}[Proof of Theorem \ref{main_result}]
We apply Proposition \ref{prop_Kallenberg}. Hence, we have to show the following conditions:
\begin{enumerate}[(i)]
\item $\lim_{n \rightarrow \infty} \mP(\Pi^S_n U =0) =\mP(\Pi U =0)$ for all $U \in \mathcal{U}$, where $\mathcal{U}$ is a separating class of $[0,\infty)^2$ which consists of relatively compact sets.
\item $\limsup_{n \rightarrow \infty} \mE \big[ \Pi^S_n C \big] \leq \mE \big[ \Pi C \big]$ for all compact $C \in \mathcal{B}([0,\infty)^2)$.
\end{enumerate}
We start by showing condition (i). As a separating class of $[0,\infty)^2$ we use the set $\mathcal{R}$ defined in \eqref{set_R} which consists of disjoint unions of rectangles in $[0,\infty)^2$. Let be
$$U = \bigcup_{k=1}^m B_k  \in \mathcal{R}$$
with disjoint $B_k = [s^k_1,u^k_1) \times [s^k_2,u^k_2)$. We have to show that $\mP(\Pi^S_n U =0)$ converges to
\begin{align}\label{eq_111}
\mP(\Pi U =0) = \exp\big( - \sum_{k=1}^m (u^k_1-s^k_1)(u^k_2-s^k_2) \big).
\end{align}
The latter equality holds since $\Pi$ is a Poisson point process with intensity measure $\lambda^2$ and $\lambda^2(U) = \sum_{k=1}^m (u^k_1-s^k_1)(u^k_2-s^k_2)$. 

\smallskip

For fixed $n$ we have
\begin{align}\label{eq_112}
\mP(\Pi_n^S U =0) &=  \mP(\sum_{k=1}^m \Pi_n^K B_k  =0) \\
&=\mP( \Pi_n^S B_k  =0 \text{ for } k=1,\ldots,m).
\end{align}
Since the $B_k$ are disjoint and $\Pi^S_n$ is defined via the Poisson point process $\mathfrak{P}$, it follows that the $\Pi^S_n B_k$ are independent conditionally given
$$L =  \Big\{ L_{\lfloor n^{s^k_1} \rfloor},L_{\lfloor n^{u^k_1}\rfloor},L_{\lfloor n^{s^k_2} \rfloor},L_{\lfloor n^{u^k_2}\rfloor}, k=1,\ldots,m \Big\}.$$ More precisely it holds that
\begin{align}\label{help9}
\begin{split}
&\quad \mP( \Pi^S_n B_k  =0 \text{ for } k=1,\ldots,m) \\
&= \mE \Big[ \mP(\Pi^S_n B_k  =0 \text{ for } k=1,\ldots,m \,|\, L_{\lfloor n^{s^k_1} \rfloor},L_{\lfloor n^{u^k_1}\rfloor},L_{\lfloor n^{s^k_2} \rfloor},L_{\lfloor n^{u^k_2}\rfloor}, k=1,\ldots,m) \Big] \\
&=  \mE  \Big[ \prod_{k=1}^m \mP(\Pi^S_n B_k  =0 \,|\, L_{\lfloor n^{s^k_1} \rfloor},L_{\lfloor n^{u^k_1}\rfloor},L_{\lfloor n^{s^k_2} \rfloor},L_{\lfloor n^{u^k_2}\rfloor}, k=1,\ldots,m) \Big].
\end{split}
\end{align}
Again because of the definition $\Pi^S_n$ via the Poisson point process $\mathfrak{P}$ it holds
\begin{align*}
&\quad \mP(\Pi^S_n B_k  =0 \,|\, L_{\lfloor n^{s^k_1} \rfloor},L_{\lfloor n^{u^k_1}\rfloor},L_{\lfloor n^{s^k_2} \rfloor},L_{\lfloor n^{u^k_2}\rfloor}, k=1,\ldots,m)\\
&= \exp\big( -\frac{1}{2 \log n} (u^k_1-s^k_1)(L_{\lfloor n^{u^k_2}\rfloor}-L_{\lfloor n^{s^k_2} \rfloor}) \big).
\end{align*}
Thus, using Corollary \ref{cor_Ln} it follows
$$\lim_{n \rightarrow \infty} \mP(\Pi^S_n B_k  =0 \,|\, L_{\lfloor n^{s^k_1} \rfloor},L_{\lfloor n^{u^k_1}\rfloor},L_{\lfloor n^{s^k_2} \rfloor},L_{\lfloor n^{u^k_2}\rfloor}, k=1,\ldots,m) = \exp\big( -(u^k_1-s^k_1)(u^k_2-s^k_2) \big)$$
almost surely for $k=1,\ldots,m$. We combine this with dominated convergence in \eqref{help9} (possible because probabilities are bounded by $1$) to obtain that
\begin{align*}
\lim_{n \rightarrow \infty} \mP( \Pi^S_n B_k  =0 \text{ for } k=1,\ldots,m) &= \prod_{k=1}^m \exp\big( -(u^k_1-s^k_1)(u^k_2-s^k_2) \big)\\
&= \exp\big( -\sum_{k=1}^m (u^k_1-s^k_1)(u^k_2-s^k_2) \big),
\end{align*}
which together with \eqref{eq_111} and \eqref{eq_112} shows condition (i).

\smallskip

Now we show condition (ii) via proving the stronger result
$$\lim_{n \rightarrow \infty} \mE \Pi^S_n C = \mE \Pi C  =\lambda^2(C),$$
where the last identity holds because $\Pi$ is a Poisson point process with intensity measure $\lambda^2$. First let be $C = [0,t) = [0,t_1) \times [0,t_2)$ (although this set is not compact). As we mentioned above it holds $\tilde{S}(n,t) =_d \Pi_n[0,t)$ and hence,
\begin{align}\label{help7}
\mE \Pi_n^S [0,t)  = \mE  \tilde{S}(n,t) =  \mE S(n,t_1/(\log n),t_2) .
\end{align}
Combining \eqref{help7} with Theorem \ref{thm_pitters} we get 
\begin{align}\label{help 8}
\lim_{n \rightarrow \infty} \mE \Pi^S_n [0,t) =  \lim_{n \rightarrow \infty} \frac{t_1}{\log n} \sum_{k=1}^{\lfloor n^{t_2} \rfloor-1} 1/k =t_1 t_2 = \lambda([0,t)),
\end{align}
which shows the case $C = [0,t)$. 

\smallskip

For a set $B = [s,u) = [s_1,u_1) \times [s_2,u_2)$ we will show $\lim_{n \rightarrow \infty} \mE \Pi^S_n B  = \mE \Pi B  =(u_1-s_1)(u_2-s_2)$. Let us begin as follows
\begin{align}\label{help1}
\begin{split}
\mE  \Pi^S_n B  &= \mE \Big[ (\mE  \Pi^S_n B)  | L_{\lfloor n^{u_2} \rfloor} ,L_{\lfloor n^{s_2} \rfloor}  \Big] \\
&= \frac{u_1-s_1}{2 \log n} \mE \big[ L_{\lfloor n^{u_2} \rfloor} -L_{\lfloor n^{s_2} \rfloor} \big].
\end{split}
\end{align}
By \eqref{Ln_exp} it follows
\begin{align}\label{help2}
\begin{split}
\mE \big[ L_{\lfloor n^{u_2} \rfloor} -L_{\lfloor n^{s_2} \rfloor} \big] &=  \sum_{k=\lfloor n^{s_2} \rfloor+1}^{\lfloor n^{u_2} \rfloor} \mE  \tau_k  \\
&= \sum_{k=2}^{\lfloor n^{u_2} \rfloor} \mE \tau_k  -  \sum_{k=2}^{\lfloor n^{s_2} \rfloor} \mE  \tau_k  \\
&= \mE  L_{\lfloor n^{u_2} \rfloor} - \mE  L_{\lfloor n^{s_2} \rfloor}.
\end{split}
\end{align}
From Corollary \ref{cor_ELn} we know that  $\mE  L_{\lfloor n^{t} \rfloor} $ behaves like $2t \log n$ for large $n$ we obtain
\begin{align}\label{help6}
\begin{split}
\lim_{n \rightarrow \infty }\mE  \Pi^S_n B &= \lim_{n \rightarrow \infty } \Big(\frac{u_1-s_1}{2 \log n} 2(u_2-s_2) \log n\Big)\\
&= (u_1-s_1)(u_2-s_2).
\end{split}
\end{align}
Now let us assume that $C \in \mathcal{B}([0,\infty)^2)$ is compact. To show (i), we write it as a (countable) union of sets which have almost the same form as $B$. More concretely, let
$$C = \bigcup_{k=1}^\infty \overline{B_k},$$
with $B_k =  [s^k,u^k) = [s_1^k,u_1^k) \times [s_2^k,u_2^k) \subset [0,\infty)^2$ such that $B_k \cap B_j = \emptyset$ for $k \neq j$.

\smallskip

\underline{The case $\inf_{k \in \mN} s_2^k >0$}: First we assume $\inf_{k \in \mN} s_2^k >0$, i.e. the set $C$ does not touch the $x$-axes. Since $\Pi^S_n$ has no mass on one-dimensional sets and is a measure almost surely we get with a Fubini flip
\begin{align}
\begin{split}
\mE \Pi^S_n C  &= \mE \Big[ \Pi^S_n \big( \bigcup_{k=1}^\infty \overline{B_k} \big) \Big] = \mE \Big[ \Pi^S_n \big( \bigcup_{k=1}^\infty B_k \big) \Big] = \sum_{k=1}^\infty \mE \Pi^S_n B_k .
\end{split}
\end{align}
To calculate $\lim_{n \rightarrow \infty} \sum_{k=1}^\infty \mE  \Pi^S_n B_k $ we will switch the limit and the sum via dominated convergence. For that it is necessary to show that there is a sequence $a_k$ such that $\mE \Pi^S_n B_k \leq a_k$ for all $k \in \mN$ and $\sum_{k=1}^\infty a_k < \infty$. With \eqref{help1} and \eqref{help2} we see
\begin{align}\label{help3}
\mE  \Pi^S_n B_k  = \frac{u^k_1-s^k_1}{2 \log n} \big(\mE  L_{\lfloor n^{u^k_2} \rfloor}  - \mE L_{\lfloor n^{s^k_2} \rfloor} \big).
\end{align}
We have to go in more detail concerning the expectations on the right-hand side. By Lemma \ref{lemma_ELn} it holds
\begin{align*}
\mE L_{\lfloor n^{u^k_2} \rfloor}  - \mE L_{\lfloor n^{s^k_2} \rfloor}  &=  2 \Big( \sum_{l=1}^{\lfloor n^{u^k_2}\rfloor-1} \frac{1}{l}  - \sum_{l=1}^{\lfloor n^{s^k_2} \rfloor-1} \frac{1}{l} \Big).
\end{align*}
Because of our assumption $\inf_{k \in \mN} s_2^k >0$ we can assume that $n$ is large enough for $\lfloor n^{s^k_2} \rfloor \geq 3$ for all $k \in \mN$. Hence, the first summand in both sums cancels, i.e.
\begin{align}\label{help4}
\begin{split}
\mE  L_{\lfloor n^{u^k_2} \rfloor}- \mE L_{\lfloor n^{s^k_2} \rfloor}  &=  2 \Big( \sum_{l=2}^{\lfloor n^{u^k_2} \rfloor-1} \frac{1}{l}  - \sum_{l=2}^{\lfloor n^{s^k_2} \rfloor-1} \frac{1}{l} \Big) \\
&=2 \big( H^*_{\lfloor n^{u^k_2} \rfloor-1} - H^*_{\lfloor n^{s^k_2} \rfloor-1} \big),
\end{split}
\end{align}
where $H^*_n = \sum_{k=2}^{n} 1/l$ for $n \in \mN$. 
From \eqref{eq_harmonic} we know
\begin{align}\label{help5}
\frac{1}{\log n} \big( H^*_{\lfloor n^{u^k_2} \rfloor-1} - H^*_{\lfloor n^{s^k_2} \rfloor-1} \big) \leq u^k_2 - s^k_2.
\end{align}
Combining \eqref{help3}, \eqref{help4} and \eqref{help5}, we obtain
\begin{align}
\mE \Pi^S_n B_k  \leq  \frac{1}{2}(u^k_1-s^k_1)2(u^k_2-s^k_2) = \lambda^2(B_k) =: a_k.
\end{align}
Furthermore, we have
$$\sum_{k=1}^\infty a_k =   \sum_{k=1}^\infty \lambda^2(B_k) =   \lambda^2\big(\bigcup_{k=1}^\infty B_k \big) =   \lambda^2(K) < \infty,$$
because the $B_k$'s are disjoint. So we can apply dominated convergence and get with \eqref{help6}
\begin{align}\begin{split}
\lim_{n \rightarrow \infty}\mE \Pi^S_n C  &=  \sum_{k=1}^\infty \lim_{n \rightarrow \infty}\mE  \Pi^S_n B_k  \\
&= \sum_{k=1}^\infty (u^k_1-s^k_1)(u^k_2-s^k_2) \\
&= \lambda\big(\bigcup_{k=1}^\infty B_k \big) \\
&= \lambda\big(\bigcup_{k=1}^\infty \overline{B_k} \big) \\
&=  \lambda(C).
\end{split}
\end{align}
\underline{The general case}: Now let be $C \in \cB([0,\infty))$ be an arbitrary compact Borel set. We define 
$$\overline{s}_1 \coloneqq \sup\{s_1 \geq 0 : (s_1,s_2) \in C \} \quad \text{and} \quad \overline{s}_2 \coloneqq \sup\{s_2 \geq 0 : (s_1,s_2) \in C \},$$ 
which are both finite since $C$ is compact. For an arbitrary but fixed $\eps>0$ we define $O^\eps := C \cap [0,\overline{s}_1) \times [0,\eps)$ and $M^\eps := C  \cap [0,\overline{s}_1) \times [\eps,\overline{s}_2)$ such that obviously $C = \overline{O^\eps} \cup \overline{M^\eps}$ and $O^\eps \cap M^\eps = \emptyset$. The set $M^\eps$ is a set of the form of the first case (when the set does not touch the $x$-axes), hence,
$$\lim_{n \rightarrow \infty} \mE \Pi^S_n \overline{M^\eps} = \lim_{n \rightarrow \infty} \mE \Pi^S_n M^\eps  = \lambda(M^\eps) \in \Big[\lambda(C)-\overline{s}_1\eps, \lambda(C)\Big].$$
For the set $O^\eps$ we see with the calculations for sets of the form $[0,t), t \in [0,\infty)^2$:
$$\lim_{n \rightarrow \infty} \mE \Pi^S_n \overline{O^\eps} = \lim_{n \rightarrow \infty} \mE \Pi^S_n O^\eps  \leq \lim_{n \rightarrow \infty} \mE \Pi^S_n([0,\overline{s}_1) \times [0,\eps)) = \overline{s}_1\eps.$$
By these two statements we obtain
\begin{align}
\lim_{n \rightarrow \infty} \mE \Pi^S_n C =  \lim_{n \rightarrow \infty} \mE \Pi^S_n O^\eps  +\lim_{n \rightarrow \infty} \mE  \Pi^S_n M^\eps  \in \Big[\lambda(C)-\overline{s}_1\eps, \lambda(C)+\overline{s}_1\eps\Big].
\end{align}
Since $\eps>0$ was arbitrary we obtain $\lim_{n \rightarrow \infty} \mE \Pi^S_n C  =\lambda(C)$.
\end{proof}

\subsection{Proof of Theorem \ref{main_result2}}
We start with a lemma which gives a first idea of the convergence of $\Pi_n^K$. First, we note that $\Pi_n^S  - \Pi_n^K$ is a random measure for fixed $n \in \mN$. Indeed, by definition it holds $\Pi_n^S(B)  - \Pi_n^K(B) \geq 0$ almost surely for all $B \in \cB([0,\infty)^2)$. Furthermore, $\Pi_n^S$ and $\Pi_n^K$ are both $\sigma$-additive which implies the same property for $\Pi_n^S  - \Pi_n^K$.
\begin{lemma}\label{lemma_help1}
The sequence of random measures $\{ \Pi_n^S - \Pi_n^K; n \in \mN \}$ converges weakly to $0$.
\end{lemma}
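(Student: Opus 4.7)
The plan is to prove this convergence by a direct first-moment argument, exploiting the fact that $\Pi_n^S - \Pi_n^K$ is a non-negative, integer-valued random measure whose value on any Borel set counts, for each coalescent branch, the number of Poisson points on that branch in excess of the first. Since the zero measure is a (trivial) simple point process whose boundary sets always have zero mass, Theorem~\ref{thm_Kallenberg} reduces the claim to showing $(\Pi_n^S - \Pi_n^K)(B) \to_d 0$ for every relatively compact Borel set $B \subseteq [0,\infty)^2$. Non-negativity combined with Markov's inequality then reduces this further to the first-moment bound $\mE(\Pi_n^S - \Pi_n^K)(B) \to 0$.

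The core computation I would carry out is on rectangles of the form $[0,t) = [0,t_1) \times [0,t_2)$. Here I would combine the distributional identities $\Pi_n^S[0,t) =_d \tilde{S}(n,t)$ and $\Pi_n^K[0,t) + 1 =_d \tilde{K}(n,t)$ with the explicit formula for $\mE S$ from Theorem~\ref{thm_pitters} and formula~\eqref{expectaion_K} for $\mE K$. A short telescoping, after shifting the summation index $j=k-1$ in the sum for $\mE\tilde{K}$, produces the exact cancellation of the leading $t_1t_2\log n$ terms and leaves the quadratic remainder
$$\mE(\Pi_n^S - \Pi_n^K)[0,t) \;=\; \left(\frac{t_1}{\log n}\right)^2 \sum_{k=1}^{\lfloor n^{t_2}\rfloor - 1} \frac{1}{k\,(k + t_1/\log n)} \;\leq\; \left(\frac{t_1}{\log n}\right)^2 \frac{\pi^2}{6},$$
whose right-hand side clearly tends to zero as $n\to\infty$.

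For an arbitrary relatively compact Borel set $B$, I would enclose it in a rectangle $[0,T)$ with $T$ chosen large enough that $B \subseteq [0,T)$, and then use monotonicity of the non-negative random measure $\Pi_n^S-\Pi_n^K$ to conclude $\mE(\Pi_n^S - \Pi_n^K)(B) \leq \mE(\Pi_n^S - \Pi_n^K)[0,T) \to 0$, completing the reduction.

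I do not foresee a serious obstacle. The one bookkeeping subtlety is the $+1$ offset between $\tilde K$ and $\Pi_n^K$, which must be tracked with care so that the leading $t_1 t_2$ contributions in $\mE\tilde S$ and $\mE\tilde K$ cancel exactly and the $O((\log n)^{-2})$ remainder survives as the driver of the convergence. A variance-based approach via Lemma~\ref{lemma_Var} is available as a backup, but is unnecessary since the $L^1$ bound together with non-negativity already yields convergence in probability.
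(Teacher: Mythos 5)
Your proposal is correct and follows the same overall architecture as the paper's proof: reduce via Theorem~\ref{thm_Kallenberg} to showing $(\Pi_n^S-\Pi_n^K)(B)\to_d 0$ for relatively compact $B$, and drive this by the exact first-moment cancellation between $\mE\tilde S(n,t)$ (Theorem~\ref{thm_pitters}) and $\mE\tilde K(n,t)-1$ (formula~\eqref{expectaion_K}), which leaves the $O((\log n)^{-2})$ remainder you display. The one genuine difference is that the paper proves $L^2$-convergence of $\Delta(n,t)=\Pi_n^S[0,t)-\Pi_n^K[0,t)$, i.e.\ it additionally computes $\Var(\Delta(n,t))$ via Lemma~\ref{lemma_Var} and shows it vanishes, whereas you observe that $\Delta(n,t)\ge 0$ almost surely, so Markov's inequality turns the first-moment bound directly into convergence in probability and the variance step is superfluous. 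Your route is therefore a mild but real streamlining: it makes Lemma~\ref{lemma_Var} unnecessary for this lemma, at the cost of nothing, since the non-negativity of $\Pi_n^S-\Pi_n^K$ is already established in the paper before the lemma is stated. Your handling of general relatively compact $B$ (enclose in $[0,T)$ and use monotonicity of the non-negative measure) also makes explicit a step the paper passes over rather quickly, and your index bookkeeping in the telescoped sum $\left(\frac{t_1}{\log n}\right)^2\sum_{k=1}^{\lfloor n^{t_2}\rfloor-1}\frac{1}{k(k+t_1/\log n)}$ is in fact slightly more careful than the paper's own display.
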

\begin{proof}
For $t \in [0,\infty)^2$ and $n \in \mN$ we define $\Delta(n,t) := \Pi_n^S[0,t) - \Pi_n^K[0,t)$. First, we show that the random variable $\Delta(n,t)$ converges to $0$ in $L^2$ for $n \rightarrow \infty$. Because $\mE\Delta(n,t)^2 = (\mE\Delta(n,t))^2 + \Var(\Delta(n,t))$ it is sufficient to show that $ \mE \Delta(n,t)$ and $\Var(\Delta(n,t))$ converge to $0$. Since $\Pi_n^S[0,t) =_d \tilde{S}(n,t)$ and $\Pi_n^K[0,t) =_d \tilde{K}(n,t)-1$ we can apply Theorem \ref{thm_pitters} and \eqref{expectaion_K} to obtain
\begin{align*}
0 \leq \mE \Delta(n,t) &= \mE \tilde{S}(n,t) -  (\mE \tilde{K}(n,t) -1) \\
&= \frac{t_1}{\log n} \sum_{k=2}^{\lfloor n^{t_2} \rfloor} \frac{1}{k} - (1+ \sum_{k=2}^{\lfloor n^{t_2} \rfloor}  \frac{t_1/ \log n}{k + t_1 / \log n}) +1 \\
&= \frac{t_1}{\log n} \sum_{k=2}^{\lfloor n^{t_2} \rfloor} \Big(\frac{1}{k} - \frac{1}{k + t_1 / \log n} \Big) \\
&= \frac{t_1}{\log n} \sum_{k=2}^{\lfloor n^{t_2} \rfloor} \frac{t_1 / \log n}{k^2 + kt_1 / \log n } \\
&\leq \Big(  \frac{t_1}{\log n}\Big)^2  \sum_{k=2}^{\lfloor n^{t_2} \rfloor} \frac{1}{k^2} \rightarrow 0
\end{align*}
for $n \rightarrow \infty$. To show convergence of the variance we note that for $n$ big enough and a constant $C>0$ it holds:
\begin{align*}
\Var(\Delta(n,t)) &= \Var(\Pi_n^S[0,t) - \Pi_n^K[0,t))\\
&= \Var(\tilde{S}(n,t) - \tilde{K}(n,t))\\
&= \sum_{k=2}^{\lfloor n^{t_2}\rfloor } \frac{(t_1/\log n)^2 \big((t_1/\log n)^2 + 3 t_1/\log n(k-1) + (k-1)^2  \big)}{(k-1)^2(k-1+t_1/\log n)^2}\\
&\leq C \Big(\frac{t_1}{\log n} \Big)^2  \sum_{k=2}^{\lfloor n^{t_2}\rfloor } \frac{1}{(k-1)^2} \rightarrow 0.
\end{align*}
Here, the third identity is from Lemma \ref{lemma_Var}.

\smallskip

It follows that $\Delta(n,t) = \Pi_n^S[0,t) - \Pi_n^K[0,t)$ converges to $0$ in distribution for all $t \geq 0$ and since $\Pi_n^S  - \Pi_n^K$ is almost surely a measure we get
$$ \Pi_n^S(B) - \Pi_n^K(B) \to_d 0$$
for $n \rightarrow \infty$ for all $B \in \cB([0,\infty)^2)$. Theorem \ref{thm_Kallenberg} then implies that $\{ \Pi_n^S  - \Pi_n^K; n \in \mN \}$ converges weakly to the zero measure on $[0,\infty)$.
\end{proof}

\begin{proof}[Proof of Theorem~\ref{main_result2}]
We will directly apply the results of Theorem \ref{main_result} and Lemma \ref{lemma_help1}. The remarks on the top of page 25 in Billingsley \cite{Billingsley1968} and Lemma \ref{lemma_help1} imply that $\{ \Pi_n^S   - \Pi_n^K; n \in \mN \}$ converges to $0$ also in probability. Applying Theorem 4.4 of Billingsley \cite{Billingsley1968} and Theorem \ref{main_result} we obtain 
$$(\Pi_n^S,\Pi_n^S  - \Pi_n^K) \to_d (\Pi,0).$$
In the last step we use the continuous mapping theorem (see e.g. Corollary 1 on page 31 in Billingsley \cite{Billingsley1968}) with the function $f(x,y) = (x,x-y)$ to obtain the statement of our result:
\begin{align*}
(\Pi_n^S,\Pi_n^K) = f(\Pi_n^S,\Pi_n^S  - \Pi_n^K) \to_d f(\Pi,0) = (\Pi,\Pi).
\end{align*}
\end{proof}

\bibliographystyle{alpha}
\bibliography{literature}

\begin{thebibliography}{{Pit}19}

\bibitem[Bil68]{Billingsley1968}
Patrick Billingsley.
\newblock {\em Convergence of probability measures}.
\newblock John Wiley \& Sons, Inc., New York-London-Sydney, 1968.

\bibitem[Dur08]{Durrett2008}
Richard Durrett.
\newblock {\em {Probability models for DNA sequence evolution}}.
\newblock 2008.

\bibitem[Eth11]{Etheridge2011}
Alison Etheridge.
\newblock {\em {Some mathematical models from population genetics}}, volume
  2012 of {\em {Lecture Notes in Mathematics}}.
\newblock Springer, Heidelberg, 2011.
\newblock Lectures from the 39th Probability Summer School held in Saint-Flour,
  2009.

\bibitem[Eva07]{Evans2007}
Steven~N Evans.
\newblock {\em Probability and Real Trees: {\'E}cole D'{\'E}t{\'e} de
  Probabilit{\'e}s de Saint-Flour XXXV-2005}.
\newblock Springer, 2007.

\bibitem[Kal05]{MR2161313}
Olav Kallenberg.
\newblock {\em {Probabilistic symmetries and invariance principles}}.
\newblock {Probability and its Applications (New York)}. Springer, New York,
  2005.

\bibitem[Kim69]{Kimura1969}
Motoo Kimura.
\newblock {The number of heterozygous nucleotide sites maintained in a finite
  population due to steady flux of mutations}.
\newblock {\em Genetics}, 61(4):893--903, 1969.

\bibitem[Mol17]{Mol_01}
I.~Molchanov.
\newblock {\em Theory of Random Sets}.
\newblock Springer London, 2 edition, 2017.

\bibitem[MP15]{MoehlePitters2015}
M.~M\"{o}hle and H.~Pitters.
\newblock Absorption time and tree length of the {K}ingman coalescent and the
  {G}umbel distribution.
\newblock {\em Markov Process. Related Fields}, 21(2):317--338, 2015.

\bibitem[{Pit}19]{Pitters2019}
Helmut {Pitters}.
\newblock {The number of cycles in a random permutation and the number of
  segregating sites jointly converge to the Brownian sheet}.
\newblock {\em arXiv e-prints}, page arXiv:1903.04906, Mar 2019.

\bibitem[Tav04]{Tavare2004}
Simon Tavar{\'e}.
\newblock {Ancestral inference in population genetics}.
\newblock In {\em {Lectures on probability theory and statistics}}, volume 1837
  of {\em {Lecture Notes in Math.}}, pages 1--188. Springer, Berlin, 2004.

\bibitem[Wat75]{Watterson1975}
G.A. Watterson.
\newblock {On the number of segregating sites in genetical models without
  recombination}.
\newblock {\em Theoretical Population Biology}, 7(2):256--276, 1975.

\end{thebibliography}
\end{document}